\tikzstyle arrowstyle=[scale=1]
\tikzstyle directed=[postaction={decorate,decoration={markings,
    mark=at position .65 with {\arrow[arrowstyle]{stealth}}}}]
\tikzstyle reverse directed=[postaction={decorate,decoration={markings,
    mark=at position .65 with {\arrowreversed[arrowstyle]{stealth};}}}]
\providecommand{\U}[1]{\protect\rule{.1in}{.1in}}
\newtheorem{theorem}{Theorem}
\newtheorem{cor}[theorem]{Corollary}
\newtheorem{definition}[theorem]{Definition}
\newtheorem{lemma}[theorem]{Lemma}
\newtheorem{prop}[theorem]{Proposition}
\newtheorem{rem}[theorem]{Remark}
\DeclareMathOperator{\sign}{sgn}
\newcommand {\R} {\mathbb{R}}
\newcommand {\C} {\mathbb{C}}
\newcommand {\q} {\mathbf{q}}
\newcommand {\p} {\mathbf{p}}
\newcommand {\diag} {\text{diag}}
\newcommand {\J} {\mathbf{J}}
\begin{document}
  \begin{center}
        {\fontsize{18}{22}\selectfont
       \bf Stability of fixed points and associated relative equilibria of the $3$-body problem on $\mathbb S^1$ and $\mathbb S^2$}
       \end{center}

\vspace{4mm}

        \begin{center}
        {\bf Florin Diacu$^{1,2}$, Juan Manuel S\'anchez-Cerritos$^3$, and Shuqiang Zhu$^2$}\\
\bigskip
$^1$Pacific Institute for the Mathematical Sciences\\
and\\
$^2$Department of Mathematics and Statistics\\ University of Victoria, Victoria, Canada,\\
\bigskip
$^3$Departamento de Matem\'aticas\\
Universidad Aut\'onoma Metropolitana - Iztapalapa, Mexico, D.F., Mexico\\ 
\bigskip
diacu@uvic.ca, jmsc@xanum.uam.mx,  zhus@uvic.ca  
       \end{center}

        %\vspace{1mm}
        
        \begin{center}
        \today
        \end{center}

        %\vspace{2mm}
        
\abstract{We prove that the fixed points of the curved 3-body problem and their associated relative equilibria are Lyapunov stable if the solutions are  restricted to $\mathbb S^1$, but their linear stability depends on the angular velocity  if the bodies are considered on $\mathbb S^2$. More precisely, the associated relative equilibria are linearly stable if and only if the angular velocity is greater than a certain critical value.}\\
        
\noindent\textbf{Key words:} celestial mechanics; curved 3-body problem; fixed-point solutions; relative equilibria; stability.     
        %\vspace{8mm}
        
\section{Introduction}
The  curved 3-body problem is a natural extension of the classical 3-body problem to spaces of constant nonzero Gaussian curvature. Its history, which started with Bolyai and Lobachevsky, is outlined in \cite{diacu1}, where it is also shown that, in the 2-dimensional case, the study of the problem can be reduced to the unit sphere, $\mathbb S^2$, and the unit hyperbolic sphere, $\mathbb H^2$. The equations of motion can be written as a Hamiltonian system in extrinsic coordinates with holonomic constraints in the Euclidean space, for positive curvature, but in the Minkowski space, for negative curvature. This formulation led to fruitful results, especially in the study of symmetric motions, \cite{diacu1, diacu2, diacu4, diacu3, diacu5, diacu10, diacu12, diacu14, martinez, zhu1}. 

One class of solutions are the fixed points, which occur only on spheres, \cite{diacu1, diacu2}. We study here their stability as well as that of their associated relative equilibria for the 3-body problem on $\mathbb S^1$ and $\mathbb S^2$. Fixed points are critical points of the force function that defines the system on the configuration space. For the bodies $m_1, m_2, m_3$, they give rise to periodic relative equilibria, which in spherical coordinates $(\varphi,\theta)$ have the form 
\[\theta_i(t)=\theta_i, \ \ \varphi_i(t)=\varphi_i +\omega t, \ \ \dot{\theta}_i=0,\  \ \dot{\varphi}_i=\omega,\ \ i\in\{1,2,3\}, \]
for any $\omega\ne 0$. There have been some previous studies of the stability of orbits in the curved 3- and 4-body problem, \cite{diacu12, martinez}, but none of them considered fixed points,
so this appears to be a first attempt in this direction. 

For fixed points of the 3-body problem on $\mathbb S^2$, the masses must lie on a great circle of $\mathbb S^2$ at the vertices of an acute triangle, \cite{diacu1}. In Section 2, we prove a general criterion for the existence of fixed points for $n>2$ masses. It is known that not all masses can form fixed points, \cite{diacu3}, and in Section 3 we find all mass triples that do so. In Section 4, we investigate the stability of fixed points and of their associated relative equilibria on $\mathbb S^1$. Since the associated relative equilibria are periodic orbits, their stability is defined as the stability of the corresponding rest points of the flow in the reduced phase space, \cite{simo2}. To perform the reduction, we use coordinates similar to the Jacobi coordinates of the classical $n$-body problem. By  showing that the corresponding rest points are maxima of the force function, we prove that the fixed points and the associated relative equilibria are Lyapunov stable on $\mathbb S^1$. In Section 5, we study the linear stability of  these solutions on $\mathbb S^2$. Their linear stability is defined as the stability of the corresponding rest points of the linearized system restricted to  proper linear subspaces. Using an idea from the work of Rick Moeckel, \cite{moeckel1}, we determine proper linear subspaces to study the fixed-point solutions and the relative equilibria. Then we show that their linear stability depends on the angular velocity $\omega$. More precisely, the associated relative equilibria are linearly stable if and only if the angular velocity is greater than a certain value determined by the configuration.

%%%%%%%%%%%%
%%%%%%%%%%%%
%%%%%%%%%%%%
\section{Existence of fixed points on $\mathbb S^1$}
The goal of this section is to introduce the curved $n$-body problem on $\mathbb S^2$, define a class of orbits we call fixed points, and provide a criterion for the existence of these solutions for any number $n>2$ of point masses.  

A Hamiltonian system is given by the triple $(H, M, w)$, where $M$ is an even dimensional manifold, $w$ is a symplectic form on $M$, and $H$ is an infinitely differentiable, real-valued function on $M$.  The curved $n$-body problem on $\mathbb S^2, n\ge 2,$ is given by a Hamiltonian system $(H, M, w)$ as follows. Let $m_1,\dots,m_n$ be a system of $n$ positive point masses (bodies), whose configuration is given by the vector 
$$
\q=(\q_1, \cdots, \q_n)\in (\mathbb S^2)^n,
$$ 
with $\q_i=(x_i,y_i,z_i)=(\sin \theta_i \cos \varphi_i, \sin \theta_i \sin \varphi_i,\cos \theta_i)$ in spherical coordinates $(\varphi_i, \theta_i)$, $i\in\{1,\dots,n\}$. We denote by $d_{ij}$ the geodesic distance between the point masses $m_i$ and $m_j, \ i,j\in\{1,\dots,n\}$. The kinetic energy has the form 
\[
T\left(\varphi_1, \theta_1, \cdots, \varphi_n, \theta_n\right):=\sum_{i=1}^n \frac{m_i}{2}\left(\dot{\theta}_i^2 +\sin^2\theta_i\dot{\varphi}_i^2\right),
\] 
and the conjugate momenta are given by 
\[ 
p_{\theta_i}=m_i\dot{\theta}_i, \ \  p_{\varphi_i}=m_i\sin^2\theta_i\dot{\varphi}_i, \ i\in\{1,\dots,n\}.  
\]
The configuration space is $W=(\mathbb S^2)^n\setminus\Delta$, where $\Delta$ is the singularity set, 
\[\Delta:=\{\q \in (\mathbb S^2)^n: \exists (i,j) \ \mbox{with $i\ne j$ such that} \ d_{ij}=0 \ \mbox{or}\   d_{ij}= \pi\}. \]
The cotangent potential on $W$, which extends the Newtonian potential to the sphere, is given by
\[ 
U\left(\varphi_1, \theta_1, \cdots, \varphi_n, \theta_n\right):=-\sum_{1\le i< j\le n} m_im_j\cot d_{ij}. 
\]
Let $d$ denote the differential, $V=-U$ be the force function, and $T^*W$ represent the cotangent bundle of $W$. Then the curved $n$-body problem on $\mathbb S^2$ is described by the Hamiltonian system 
\begin{equation}\label{Ham}
H=T+U=T-V, \ \ M= T^*W, \ \ w=d\left(\sum_i  p_{\theta_i} d\theta_i +p_{\varphi_i} d \varphi_i\right). \end{equation}

\begin{definition}
A configuration $\q\in W=(\mathbb S^2)^n\setminus\Delta$ is called a fixed point if it is a critical point of the force function $V$. 
\end{definition}

The terminology is justified since, once placed at such an initial configuration in $\mathbb S^2$ with zero initial velocities, the bodies don't move, so we have a rest point of the system. Therefore a configuration of this kind can be also thought as a solution of \eqref{Ham} with zero momenta. 

Since in this paper we are mainly interested in fixed points of the 3-body problem, we must note that the three bodies always lie on a geodesic $\mathbb S^1$, as shown in \cite{diacu1}. So for our further purposes we will assume that all $n$ bodies lie on $\mathbb S^1$. Without loss of generality, we can take $\mathbb S^1$ to be the equator, so a fixed-point solution has the form
\begin{equation} 
\theta_i(t)=\frac{\pi}{2}, \ \ \varphi_i(t)=\varphi_i\ ({\rm constant}), \ \ p_{\theta_i}= p_{\varphi_i}=0, \ \ i\in\{1,\dots,n\},
\label{fpsolution}
\end{equation}
If we rotate a fixed point uniformly about the $z$-axis with any angular velocity $\omega\ne 0$, we obtain an associated  relative equilibrium:
\begin{equation} 
\theta_i(t)=\frac{\pi}{2}, \ \ \varphi_i(t)=a_i +\omega t, \ \ p_{\theta_i}=0,\ \  p_{\varphi_i}=m_i\omega,\ \ i\in\{1,\dots,n\},
\label{resolution}
\end{equation} 
where $a_1,\dots, a_n$ are constants.

It has been known that  fixed points cannot lie within one hemisphere, \cite{diacu1}. We briefly sketch the proof here. We use the Cartesian coordinates of $\mathbb S^2$, $\q_i=(x_i,y_i,z_i)$. Using the fact that 
$x_i^2+y_i^2+z_i^2=1$ and $\cos d_{ij}=\q_i \cdot \q_j$, direct computation shows $\frac{\partial d_{ij}}{\partial z_i}=\frac {z_j-\cos d_{ij}z_i}{-\sin d_{ij}}$. 
Suppose that $\q$ is a configuration with all particles  on the north hemisphere and $q_1$ is the lowest, i.e., $z_i\ge z_1\ge 0$ for all $i$, and that not all  particles are on the equator. Then 
\[ \frac{\partial d_{1j}}{\partial z_1}=\frac {z_j-\cos d_{1j}z_1}{-\sin d_{1j}}\le\frac {z_1(1-\cos d_{1j})}{-\sin d_{1j}}<0.  \]   
Then $\frac{\partial V}{\partial z_1}=\sum_{j=1,j\ne i}^n\frac{-m_im_j}{\sin^2 d_{1j}}\frac{\partial d_{1j}}{\partial z_1}>0$. Hence $\q$ could not be a critical point of $V$.  
 
Let us now provide a general criterion for the existence of fixed points on $\mathbb S^1$ for any number $n>2$  masses. Because of the $SO(2)$-symmetry that acts on the bodies, we can assume without loss of generality that 
$$
0=\varphi_1< \varphi_2<\cdots<\varphi_n<2\pi, \ 
$$ and 
$
\varphi_{i+1}-\varphi_{i}<\pi,\ \ i\in\{1,\dots,n-1\}\ \ {\rm and}\ \  \varphi_{n}-\varphi_{1}>\pi.
$

%%%%%%%   
 \begin{prop}
 A configuration $\q$ on $(\mathbb S^1)^n\setminus\Delta$ is a fixed point  if and only if 
 \begin{equation} 
  \sum_{i=1, i\neq k}^N \frac{m_km_i\sin (\varphi_k-\varphi_i) }{\sin ^3 d_{ki}}=0,
 \label{nfp}
 \end{equation}
 for each $k \in \{1,\dots,n\}$.
 \end{prop}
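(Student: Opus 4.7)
The plan is a direct computation: I would compute $\partial V/\partial\varphi_k$ and $\partial V/\partial\theta_k$ at an equatorial configuration $\theta_1 = \cdots = \theta_n = \pi/2$, and show that the $\varphi$-equations reduce to the stated condition while the $\theta$-equations are satisfied automatically.

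Since $V = \sum_{i<j} m_i m_j \cot d_{ij}$ depends on the coordinates only through the distances $d_{ij}$ and $\frac{d}{dx}\cot x = -1/\sin^2 x$, I get
\begin{equation*}
\frac{\partial V}{\partial \varphi_k} = -\sum_{j\ne k}\frac{m_k m_j}{\sin^2 d_{kj}}\frac{\partial d_{kj}}{\partial \varphi_k}, \qquad \frac{\partial V}{\partial \theta_k} = -\sum_{j\ne k}\frac{m_k m_j}{\sin^2 d_{kj}}\frac{\partial d_{kj}}{\partial \theta_k}.
\end{equation*}
To evaluate the geometric derivatives I differentiate the identity $\cos d_{kj} = \q_k\cdot\q_j$. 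At the equator one has $\q_i = (\cos\varphi_i,\sin\varphi_i,0)$, so $\partial\q_i/\partial\varphi_i = (-\sin\varphi_i,\cos\varphi_i,0)$ and $\partial\q_i/\partial\theta_i = (0,0,-1)$. A short dot-product computation yields
\begin{equation*}
\frac{\partial d_{kj}}{\partial \varphi_k}\bigg|_{\theta=\pi/2} = \frac{\sin(\varphi_k-\varphi_j)}{\sin d_{kj}}, \qquad \frac{\partial d_{kj}}{\partial \theta_k}\bigg|_{\theta=\pi/2} = 0,
\end{equation*}
the second identity holding because the third component of every $\q_j$ vanishes on the equator.

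Substituting back, the equations $\partial V/\partial\theta_k = 0$ are automatic (a reflection of the symmetry $\theta_i\mapsto\pi-\theta_i$ of $V$, which fixes the equator pointwise), while $\partial V/\partial\varphi_k = 0$ becomes, after an overall change of sign, precisely
\begin{equation*}
\sum_{j\ne k}\frac{m_k m_j\sin(\varphi_k-\varphi_j)}{\sin^3 d_{kj}} = 0,
\end{equation*}
which is the stated criterion. The converse direction is immediate from the same computation.

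The only subtlety worth flagging is that the definition requires $\q$ to be a critical point of $V$ on the full manifold $(\mathbb{S}^2)^n\setminus\Delta$, not merely on the equatorial submanifold $(\mathbb{S}^1)^n\setminus\Delta$, so the normal derivatives $\partial V/\partial\theta_k$ must be checked explicitly rather than ignored. The symmetry argument above disposes of this for free, so there is no genuinely difficult step; the proposition is essentially a coordinate computation.
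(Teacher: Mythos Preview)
Your proof is correct and follows the same overall strategy as the paper's: write $\partial V/\partial\varphi_k$ in terms of $\partial d_{ki}/\partial\varphi_k$ and simplify. Two minor differences are worth noting. First, the paper obtains $\partial d_{ki}/\partial\varphi_k = \sin(\varphi_k-\varphi_i)/\sin d_{ki}$ via a four-case table on the range of $\varphi_k-\varphi_i$, whereas you get it more cleanly by differentiating the identity $\cos d_{kj} = \q_k\cdot\q_j$ in Cartesian coordinates; the paper in fact uses exactly your dot-product technique later, in the proof of Lemma~\ref{Jacobian}. Second, and more substantively, you explicitly verify that $\partial V/\partial\theta_k = 0$ on the equator. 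The paper's proof omits this step, though---as you correctly flag---it is needed, since a fixed point is by definition a critical point of $V$ on all of $(\mathbb{S}^2)^n\setminus\Delta$, not merely on the equatorial slice. Your one-line argument (the third component of each $\q_j$ vanishes) disposes of it immediately, so your version is in fact slightly more complete than the paper's.
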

%%%%%%% 
\begin{proof}
Fixed points are critical points of the force function $V=\sum_{1\le i<j\le n} m_im_j \cot d_{ij}.$
Notice that  $d_{ki} \in (0,\pi)$ could be either $|\varphi_k-\varphi_i|$ or  $2\pi-|\varphi_k-\varphi_i|$.  Then $\frac{\partial d_{ki}}{\partial \varphi_k}$ takes the values $\pm1$. These facts are summarized in the following table.
\begin{center}
\begin{tabular}{|p{2cm}|p{4cm}|p{1cm}|}
  \hline
  $\varphi_k-\varphi_i$ & $d_{ki}$ &  $\frac{\partial d_{ki}}{\partial \varphi_k}$  \\ \hline
   $(-2\pi, -\pi)$ &$2\pi + \varphi_k-\varphi_i$ & 1 \\ 
  \hline
  $(-\pi, 0)$ &$ \varphi_i-\varphi_k$ & -1\\ 
    \hline
    $(0, \pi)$ &$\varphi_k-\varphi_i$ & 1   \\ 
      \hline
      $(\pi, 2\pi)$& $2\pi - \varphi_k+\varphi_i$ & -1   \\ 
        \hline
\end{tabular}
\end{center}
Then
$$
\frac{\partial d_{ki}}{\partial \varphi_k} = \sign( \sin (\varphi_k-\varphi_i)) = \frac{ \sin (\varphi_k-\varphi_i)}{ |\sin (\varphi_k-\varphi_i)|} =\frac{ \sin (\varphi_k-\varphi_i)}{ \sin d_{ki}}.
$$
Taking  the partial derivatives of $V$ respect to  $\varphi_k,\ k\in\{1,\dots,n\}$, we obtain that
$$
0= \sum_{i=1, i\neq k}^n \frac{m_km_i }{\sin ^2 d_{ki}} \frac{\partial d_{ki}}{\partial \varphi_k}= \sum_{i=1, i\neq k}^n \frac{m_km_i\sin (\varphi_k-\varphi_i) }{\sin ^3 d_{ki}}
$$
for each $k \in \{1,\cdots,n\}$, a remark that completes the proof.
\end{proof}

%%%%%%%%%%%
%%%%%%%%%%%
\section{Triangular fixed points}\label{configuration}

In this section we focus on the case $n=3$. We set $\alpha=\varphi_2-\varphi_1, \ \beta=\varphi_3-\varphi_2$. Then, since $ \varphi_{i+1}-\varphi_{i}<\pi$, we obtain
\begin{equation} 
  0<\alpha<\pi,\ \ \ 0<\beta<\pi,\ \ \ \pi<\alpha+\beta<2\pi
  \label{acutecondition}
  \end{equation} 
and 
\[d_{12}=\alpha,\ d_{23}=\beta, \ d_{13}=2\pi-(\alpha+\beta).\] 
From the point of view of Euclidean geometry, a fixed point for three masses forms a triangle in the $xy$-plane. We have $\angle \q_2\q_1\q_3=\frac{d_{23}}{2}<\frac{\pi}{2}$, and, similarly, the two other angles are acute. Thus each fixed point must be an acute triangle (see Figure \ref{fig:acute}).

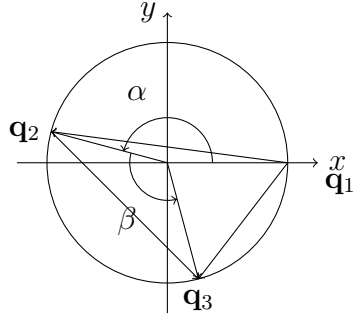
\begin{figure}[!h]
	\centering
  \begin{tikzpicture}
  \draw (0,  0) circle (1.6);
  \draw[->] (0,0) -- (165:1.6);
  \draw[->] (0,0) -- (285:1.6);
  \draw[->] (0:1.6) -- (165:1.6);
  \draw[->] (0:1.6) -- (285:1.6);
  \draw[->] (165:1.6) -- (285:1.6);
  \draw[->] (0:.6)arc(0:165:.6);
  \draw[->] (165:.5)arc(165:285:.5);
  
  \node[above] at(120:.8) {$\alpha$};
  \node[below] at(220:.7) {$\beta$};
  \node[below] at(0:2.3) {$\mathbf{q}_1$};
  \node[left] at (165:1.6) {$\mathbf{q}_2$};
  \node[below] at (285:1.6) {$\mathbf{q}_3$};
  
   \draw [->] (0, -2)--(0, 2) node (zaxis) [left] {$y$};
         \draw [->](-2, 0)--(2, 0) node (yaxis) [right] {$x$};
  \end{tikzpicture}
  \caption{An acute triangle fixed-point configuration}
  \label{fig:acute}
  \end{figure}

Equations (\ref{nfp}) above can be written in the form  
  \begin{equation} 
   \frac{m_2}{\sin^2 \alpha}= \frac{m_3}{\sin^2 (\alpha +\beta)},\ \ \ \ \frac{m_1}{\sin^2 \alpha}= \frac{m_3}{\sin^2 \beta},\ \ \ \ 
   \frac{m_2}{\sin^2 \beta}= \frac{m_1}{\sin^2 (\alpha +\beta)}.
   \label{3fp}
   \end{equation}
With the help of these equations, we can now find all fixed points. It is known on one hand that, for any acute triangle configuration, there are mass triples $(m_1,m_2,m_3)$ that generate  fixed points, \cite{diacu3, zhu1}. On the other hand,  not all mass triples can form fixed points. So for what mass triples, are there fixed points and how many such fixed points occur in each case?  We count these configurations by fixing 
$$
\varphi_1=0<\varphi_2<\varphi_3<2\pi,
$$ 
since any other configurations differs from this one by an isometry. It is easy to see that for any acute triangle there are infinitely many admissible mass triples,  
\begin{equation} 
(m_1,m_2,m_3)=m_3\left(\frac{\sin^2 \alpha}{\sin^2 \beta}, \frac{\sin^2 \alpha}{\sin^2 (\alpha+\beta)}, 1\right). 
\label{3fp1}
\end{equation}

Hence we normalize the masses by taking $m_1+m_2+m_3=1$.  Then the admissible masses can be represented by $(m_1,m_2)\in (0,1)\times(0,1)$. Let us denote by $\mathcal{U}$ the open region in $\R^2$ defined by (\ref{acutecondition}), i.e., $0<\alpha<\pi, 0<\beta<\pi, \pi<\alpha+\beta<2\pi.$
Then (\ref{3fp1}) defines a mapping,
\begin{equation} 
\begin{split}
\chi: \mathcal{U} &\to (0,1)\times(0,1),\\
      (\alpha,\beta)& \mapsto (m_1,m_2)=\frac{1}{\tau}\left(\frac{\sin^2 \alpha}{\sin^2 \beta},\frac{\sin^2 \alpha}{\sin^2 (\alpha+\beta)} \right),
\end{split}
\notag
\end{equation}
where $\tau=\frac{\sin^2 \alpha}{\sin^2 \beta}+ \frac{\sin^2 \alpha}{\sin^2 (\alpha+\beta)}+ 1$. 

Then we can formulate the question related to the admissible masses in more precise terms: What is the image of $\chi$? Is $\chi$ injective? Or, in other words, for what pairs $(m_1,m_2)$ can we find an acute triangle fixed-point configuration? Is such configuration unique?  The following result provides the answers to these questions.
 \begin{theorem}\label{image}
 Let  
 \[ \mathcal{V}:=\{ (m_1,m_2) \in (0,1)\times(0,1):\ m_1^2m_2^2+ m_1^2m_3^2+m_2^2m_3^2 -2m_1m_2m_3<0\},\] 
where $m_3=1-(m_1+m_2)$  (see Figure 2 ). 
 Then there exists an acute triangle fixed-point configuration only for mass triples  $(m_1,m_2,m_3=1-m_1-m_2)$ such that  $(m_1,m_2)\in \mathcal{V}$. Furthermore, such a configuration is unique. 
 \end{theorem}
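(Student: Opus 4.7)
The plan is to translate the fixed-point equations (\ref{3fp}) into a standard existence problem for a Euclidean triangle. Setting $\gamma=2\pi-\alpha-\beta$ so that $d_{13}=\gamma\in(0,\pi)$ and $\sin^2(\alpha+\beta)=\sin^2\gamma$, the three equations in (\ref{3fp}) collapse to the single chain
\[
m_3\sin^2\alpha=m_1\sin^2\beta=m_2\sin^2\gamma,
\]
and taking positive square roots yields $\sin\alpha:\sin\beta:\sin\gamma=\sqrt{m_1m_2}:\sqrt{m_2m_3}:\sqrt{m_1m_3}$. Since $\alpha,\beta,\gamma\in(0,\pi)$ and $\alpha+\beta+\gamma=2\pi$, the supplementary angles $\alpha'=\pi-\alpha,\ \beta'=\pi-\beta,\ \gamma'=\pi-\gamma$ lie in $(0,\pi)$, sum to $\pi$, and satisfy $\sin\alpha'=\sin\alpha$, etc. Consequently, solving (\ref{3fp}) in $\mathcal{U}$ is equivalent to producing a Euclidean triangle $T$ whose angles are $\alpha',\beta',\gamma'$ and whose sides lie in the ratio $\sqrt{m_1m_2}:\sqrt{m_2m_3}:\sqrt{m_1m_3}$.

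By the Euclidean law of sines the sides of $T$ are proportional to the sines of the opposite angles, so such a triangle exists and is unique up to similarity precisely when $a=\sqrt{m_1m_2}$, $b=\sqrt{m_2m_3}$, $c=\sqrt{m_1m_3}$ satisfy the strict triangle inequality. I then invoke the Heron-type identity
\[
(a+b+c)(-a+b+c)(a-b+c)(a+b-c)=2a^2b^2+2b^2c^2+2a^2c^2-a^4-b^4-c^4
\]
and substitute $a^2=m_1m_2$, $b^2=m_2m_3$, $c^2=m_1m_3$. Using that $a^2b^2+b^2c^2+a^2c^2=m_1m_2m_3(m_1+m_2+m_3)=m_1m_2m_3$ by the normalization, the right-hand side collapses to $-\bigl(m_1^2m_2^2+m_1^2m_3^2+m_2^2m_3^2-2m_1m_2m_3\bigr)$. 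Hence the triangle inequality for $a,b,c$ is strict if and only if $(m_1,m_2)\in\mathcal{V}$.

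Combining the two directions finishes the proof. If $(\alpha,\beta)\in\mathcal{U}$ satisfies (\ref{3fp}), the associated Euclidean triangle is nondegenerate, so its sides obey the strict triangle inequality, giving $\chi(\alpha,\beta)\in\mathcal{V}$. Conversely, for $(m_1,m_2)\in\mathcal{V}$ the Euclidean triangle with sides $a,b,c$ exists and is unique up to similarity; its labelled angles $\alpha',\beta',\gamma'$ are thereby uniquely determined, and $(\alpha,\beta)=(\pi-\alpha',\pi-\beta')$ is then the unique element of $\mathcal{U}$ mapping to $(m_1,m_2)$ under $\chi$, the constraint $\alpha+\beta\in(\pi,2\pi)$ being automatic from $\alpha+\beta=2\pi-\gamma$ and $\gamma\in(0,\pi)$. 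I expect the main obstacle to be recognizing that the quartic polynomial defining $\mathcal{V}$ is a Heron discriminant in the disguised variables $\sqrt{m_im_j}$; once this algebraic–geometric dictionary is installed, the rest of the argument is mechanical.
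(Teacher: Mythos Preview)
Your proof is correct and takes a genuinely different route from the paper's. The paper proceeds by brute-force inversion: it sets $u=\sqrt{m_3/m_1}$, $v=\sqrt{m_3/m_2}$, solves explicitly for $\cos\alpha$ and $\cos\beta$ in terms of $u,v$, and then checks that each of the three inequalities $\cos^2\alpha<1$, $\cos^2\beta<1$, $\sin(\alpha+\beta)<0$ reduces, after substitution and the normalization $m_1+m_2+m_3=1$, to the single quartic inequality defining $\mathcal V$. Uniqueness then follows because $\cos$ is injective on $(0,\pi)$.

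Your approach is more conceptual: the substitution $\gamma=2\pi-\alpha-\beta$ followed by $\alpha'=\pi-\alpha$, etc., converts the problem into the existence of a Euclidean triangle with prescribed side ratios $\sqrt{m_1m_2}:\sqrt{m_2m_3}:\sqrt{m_1m_3}$, and the defining inequality of $\mathcal V$ is then recognized as the Heron discriminant $16\,(\text{Area})^2>0$ for that triangle. This explains \emph{why} the same quartic appears three times in the paper's computation (all three triangle inequalities fail simultaneously with the area), and it makes uniqueness immediate from the law of cosines rather than from a hand-built inverse. The paper's method, in exchange, produces the explicit closed-form formulas for $\alpha$ and $\beta$ in terms of the masses, which your argument does not directly supply (though they follow at once from the law of cosines applied to your auxiliary triangle).
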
 
 
\begin{proof}
Our approach is to find the inverse mapping $(\alpha , \beta )= \chi^{-1}(m_1,m_2)$. Then inequalities (\ref{acutecondition}) would yield inequalities for $(m_1,m_2)$, which would give $\mathcal{V}:=\chi(\mathcal{U})$, and since the inverse mapping exists, $\chi: \mathcal{U} \to \mathcal{V}$ must be bijective, so the configuration is unique. 

By (\ref{3fp}), we set  
$ u=\sqrt{\frac{m_3}{m_1}}=\frac{\sin\beta}{\sin \alpha}, \  v=\sqrt{\frac{m_3}{m_2}}=-\frac{\sin(\alpha+\beta)}{\sin \alpha}.$ 
 Then 
\[ -v=\cos\beta + \frac{\sin\beta}{\sin \alpha}\cos \alpha= \pm \sqrt{1-u^2\sin^2 \alpha} + u\cos \alpha,  \]
\[ (-u\cos \alpha-v )^2= u^2\cos^2 \alpha+2uv\cos\alpha+v^2 =1-u^2\sin^2 \alpha,
   \]
and we obtain 
\begin{equation} 
\begin{split}
\cos \alpha &= \frac{1-u^2-v^2}{2uv}= \frac{m_1m_2-m_3(m_1+m_2)}{2m_3\sqrt{m_1m_2}}\\ 
\cos \beta &= -v -u\cos \alpha =\frac{u^2-v^2-1}{2v}=\frac{m_3(m_2-m_1)-m_1m_2}{2m_1\sqrt{m_2m_3}}.
\end{split}
\label{inverse}
\end{equation}
 Both $\cos \alpha$ and $\cos \beta$ are injective on $\mathcal{U}$, so the inverse mapping   $\chi^{-1}$ is  
 \[\alpha=\cos^{-1}\frac{m_1m_2-m_3(m_1+m_2)}{2m_3\sqrt{m_1m_2}}, \ \  \beta=\cos^{-1}\frac{m_3(m_2-m_1)-m_1m_2}{2m_1\sqrt{m_2m_3}}.  \]
 To find $\chi(\mathcal{U})$, note that the condition $(\ref{acutecondition})$ is equivalent to 
 \[\cos^2 \alpha <1, \ \ \cos^2 \beta <1,\ \  \sin(\alpha+\beta)<0. \]
 Substituting (\ref{inverse}) into the above inequalities and using the normalization condition $m_1+m_2+m_3=1$, we find that the three inequalities lead to the same relationship:
 \begin{equation} 
 \begin{split}  m_1^2m_2^2+m_2^2m_3^2+m_1^2m_3^2-2m_1m_2m_3<0,\notag
 \end{split}
 \end{equation}
 which is the image of the mapping $\chi$ and it gives all admissible mass triples, a remark that completes the proof (see also Figure 2).
 \end{proof}
 \begin{figure}
   \begin{center}
   \includegraphics [width=0.35 \textwidth] {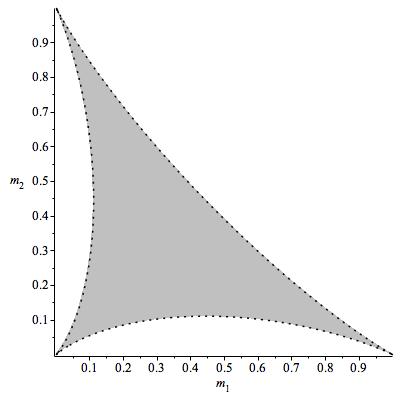}
   \end{center}
   \caption{The region $\mathcal{V}$}
   \end{figure}\label{fig:region}
   
We end this section with a property of the acute isosceles triangle fixed-point configurations. The result below was first obtained in \cite{diacu3}, so here we come up with a simpler proof. 

\begin{cor}
For each acute isosceles triangle fixed-point configuration, if the masses $m_1$ and $m_3$ are at the vertices of the base, then $m_1=m_3$ and $m_1<4m_2$.  Reciprocally,  if  $m_1=m_3$ and $m_1<4m_2$, then the only kind of fixed point these three masses can form is an acute isosceles triangle.
\end{cor}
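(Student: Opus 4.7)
The plan is to reduce both implications to the three scalar equations \eqref{3fp} and then invoke Theorem \ref{image} for uniqueness.

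For the forward direction, I start from the hypothesis that the fixed point is an acute isosceles triangle with $m_1,m_3$ at the base vertices. Geometrically, this means the two sides emanating from $m_2$ are equal, i.e., $d_{12}=d_{23}$, so $\alpha=\beta$. Substituting $\alpha=\beta$ into the first and third equations of \eqref{3fp} and equating the right-hand sides (or comparing the first with the second) immediately forces $m_1=m_3$. Then the first equation of \eqref{3fp}, combined with the double-angle identity $\sin 2\alpha=2\sin\alpha\cos\alpha$, reduces to $m_1=4m_2\cos^2\alpha$. The acute-triangle condition \eqref{acutecondition} with $\alpha=\beta$ becomes $\pi/2<\alpha<\pi$, so $\cos^2\alpha\in(0,1)$ and hence $m_1<4m_2$.

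For the reverse direction, assume $m_1=m_3$ and $m_1<4m_2$. Comparing the first equation of \eqref{3fp} with the third under $m_1=m_3$ yields $\sin^2\alpha=\sin^2\beta$; since $\alpha,\beta\in(0,\pi)$, this forces $\alpha=\beta$, so any fixed point with these masses must be isosceles. To confirm existence and acuteness, I solve $m_1=4m_2\cos^2\alpha$: the inequality $0<m_1<4m_2$ is exactly what is needed for this equation to have a solution $\alpha\in(\pi/2,\pi)$ (take the negative square root of $\cos\alpha$), which together with $\beta=\alpha$ gives a valid acute configuration satisfying \eqref{acutecondition}. Uniqueness then comes from Theorem \ref{image} once I verify $(m_1,m_2,m_3)\in\mathcal{V}$: substituting $m_3=m_1$ into the defining inequality $m_1^2m_2^2+m_1^2m_3^2+m_2^2m_3^2-2m_1m_2m_3<0$ collapses it, after factoring out $m_1^2$, to $m_1^2<4m_1m_2$, i.e., precisely $m_1<4m_2$.

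There is no real obstacle here; the only slightly subtle point is recognizing that the defining inequality of $\mathcal{V}$ from Theorem \ref{image} simplifies, under $m_1=m_3$, to the same bound $m_1<4m_2$, so the hypothesis of the corollary matches exactly the admissibility region. Once that observation is made, the proof is a short algebraic manipulation of \eqref{3fp} in each direction.
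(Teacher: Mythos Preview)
Your argument is correct and, in the forward direction, takes a more direct route than the paper. The paper obtains $m_1<4m_2$ by substituting $m_1=m_3$ into the defining inequality of $\mathcal{V}$ (using the normalization $m_1+m_2+m_3=1$ to simplify $m_1^2+2m_2^2-2m_2<0$ into $m_1^2<4m_1m_2$), whereas you extract the explicit relation $m_1=4m_2\cos^2\alpha$ from \eqref{3fp} via the double-angle formula and read off the inequality from $\cos^2\alpha\in(0,1)$. Your approach is more elementary and has the advantage that it simultaneously gives the explicit inverse (solving for $\alpha$), which you then reuse for existence in the converse. In the reverse direction the paper is terser---it just says the masses ``can form an acute isosceles triangle by the first part'' and then invokes Theorem~\ref{image}---while you argue directly from \eqref{3fp} that any fixed point with $m_1=m_3$ must satisfy $\sin^2\alpha=\sin^2\beta$, which is more self-contained.

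One small slip to patch: from $\sin^2\alpha=\sin^2\beta$ with $\alpha,\beta\in(0,\pi)$ you only get $\alpha=\beta$ \emph{or} $\alpha+\beta=\pi$; the second alternative is excluded by the constraint $\alpha+\beta>\pi$ in \eqref{acutecondition}, so you should say so explicitly. Also, when you collapse the $\mathcal{V}$ inequality to $m_1^2<4m_1m_2$, you are tacitly using the normalization $m_1+m_2+m_3=1$ (so that $1-m_2=2m_1$); since Theorem~\ref{image} is stated under that normalization this is fine, but it is worth flagging.
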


\begin{proof}
Let the fixed-point configuration be an acute isosceles triangle, for instance, $\alpha=\beta$, then by $(\ref{3fp1})$, we have $m_1=m_3$. Consequently
\begin{equation} 
\begin{split}
 m_1^2m_2^2+m_2^2m_3^2+m_1^2m_3^2-2m_1m_2m_3=m_1^2\left(m_1^2+2m_2^2-2m_2\right)<0.
\end{split}
\notag
\end{equation}
Therefore
\[m_1^2<2m_2(1-m_2)=2m_2\cdot2m_1,\]
which implies that
\[m_1<4m_2.\]
Reciprocally, if  $m_1=m_3$ and $m_1<4m_2$, then the bodies can form an acute isosceles triangle fixed-point configuration by the first part of this corollary. They can only form such configuration by Theorem \ref{image}.
\end{proof}

%%%%%%%%%%%
%%%%%%%%%%% 
\section{Reduction and stability on $\mathbb S^1$}

It is easy to see that if the initial velocity of each particle is confined to the tangent space $T(\mathbb S^1)$, then the motion takes place in $T^*(\mathbb S^1)^3$ forever, where $T^*(\mathbb S^1)$ is the cotangent bundle of $\mathbb S^1$. In other words, $T^*(\mathbb S^1)^3$ is an invariant manifold of the equations of motion. In this section, we are going to study the stability of all acute triangle fixed-point solutions and their associated relative equilibria on this invariant manifold. We will first perform the reduction of the 3-body problem to $T^*(\mathbb S^1)^3$ and then prove that all these solutions are Lyapunov stable.

Confined to $T^*(\mathbb S^1)^3$, the Hamiltonian system take the form
\[ H(\varphi_i,  p_{\varphi_i} )=\sum_i \frac{1}{2}\frac{p^2_{\varphi_i}}{m_i}-\sum_{j\neq i} m_im_j\cot d_{ij},\ T^*(\mathbb S^1)^3,  \ w=d\left(\sum_i p_{\varphi_i} d \varphi_i\right). \]
 It is easy to see that in these coordinates, the conserved total angular momentum \cite{diacu1} is
 given by the expression 
\[ \J(\varphi_i,  p_{\varphi_i})=p_{\varphi_1}+p_{\varphi_2}+p_{\varphi_3}.\]

We further restrict the study of the stability of  relative equilibria to the quotient space $S'_{p}$ of $T^*(\mathbb S^1)^3$, which we define using
\[ S_{p}:=\{  (\q,\p) \in T^*(\mathbb S^1)^3:\J =p   \}, \]
and let $S'_{p}$ be the quotient space under the Lie group $SO(2)$. For all values of $p$, these spaces are always smooth manifolds and have dimension $4$.  We can apply this procedure to the fixed-point solutions as well. The advantage of this approach is to eliminate the drift caused by the $SO(2)$ symmetry on the bodies. Indeed, by symmetry, if we perturb a fixed-point solution with some initial velocity $\dot{\varphi}_i=\epsilon$, then we obtain a relative equilibrium with angular velocity $\epsilon$.  We can therefore introduce the definition below.

\begin{definition}
A fixed-point solution (or its associated relative equilibrium) is Lyapunov stable  on $\mathbb S^1$ if the corresponding rest point  of the reduced Hamiltonian system on the quotient space $S'_p$ is Lyapunov stable.
\end{definition}

Thus we need to explicitly find the quotient manifold $S'_p$ and the \emph{reduced Hamiltonian} $H_{p}$ (see \cite{singer} for a theoretical approach to this kind of problem). The angular momentum here behaves like the linear momentum of the classical $n$-body problem. Thus to perform the reduction, we introduce a type of Jacobi coordinates \cite{MH92}:
\[ \tilde{\varphi}=\frac{1}{\bar{m}} (m_1\varphi_1 + m_2\varphi_2 + m_3\varphi_3 ),\ \ \ \phi_1=\varphi_2-\varphi_1, \ \ \ \phi_2=\varphi_3-\nu_1\varphi_1-\nu_2\varphi_2,\]
where
\[ \bar{m}=m_1+m_2+m_3, \ \ \ \nu_1= \frac{m_1}{m_1+m_2}, \ \ \ \nu_2= \frac{m_2}{m_1+m_2}.\]
The corresponding conjugate momenta are then given by
\[ p_{\tilde{\varphi}}=\bar{m}\dot{\tilde{\varphi}},\ \ \ p_{\phi_1}=\nu_3\dot{\phi}_1, \ \ \  p_{\phi_2}=\nu_4\dot{\phi}_2, \]   
where $ \nu_3=\frac{m_1m_2}{m_1+m_2}$ and  $\nu_4=\frac{(m_1+m_2)m_3}{\bar{m}}.$  It is easy to verify that 
\[ p_{\tilde{\varphi}}d\tilde{\varphi}+p_{\phi_1}d\phi_1+ p_{\phi_2}d\phi_2=p_{\varphi_1} d \varphi_1+p_{\varphi_2} d\varphi_2+p_{\varphi_3} d \varphi_3\] 
 and that the Hamiltonian is 
 \[ H= \frac{1}{2}\left( \frac{p^2_{\tilde{\varphi}}}{\bar{m}}+ \frac{p^2_{\phi_1}}{\nu_3}+\frac{p^2_{\phi_2}}{\nu_4}\right) -V(\phi_1,\phi_2), \]
 where $V(\phi_1,\phi_2)$ is the force function in the new variables.  Note that $\varphi_2-\varphi_1=\phi_1=\alpha$, and 
 \[\beta= \varphi_3-\varphi_2=\phi_2+ \nu_1\varphi_1+\nu_2\varphi_2-\varphi_2=\phi_2-\nu_1\phi_1.   \]
Recall that 
$ d_{12}=\alpha, d_{23}=\beta, d_{13}=2\pi -(\alpha+\beta).$ Thus
\begin{equation*}
\begin{split}
V(\phi_1,\phi_2)&= m_1m_2\cot \alpha +m_2m_3\cot \beta -m_1m_3\cot (\alpha +\beta)\\
                &=m_1m_2\cot \phi_1 +m_2m_3\cot (\phi_2-\nu_1\phi_1) -m_1m_3\cot (\phi_2+\nu_2\phi_1).
\end{split}
\end{equation*} 

  We now study the stability of the fixed-point solution. In this case $\dot{\varphi}_1=\dot{\varphi}_2=\dot{\varphi}_3=0,$ 
   we have $\bar{m}\dot{\tilde{\varphi}}=0$, and $\J =0$. So the quotient manifold is $S'_0$. On this quotient  manifold, $p_{\tilde{\varphi}}=0$ and  we can set  $\tilde{\varphi}=0$ since  we identify all points that differ by a rotation.   Thus  we  use  $\phi_1,\phi_2, p_{\phi_1}, p_{\phi_2}$ as the coordinates of the 4-dimensional manifold $S'_0$. Under these coordinates, we have the reduced Hamiltonian function
  \[ H_0= \frac{1}{2}\left( \frac{p^2_{\phi_1}}{\nu_3}+\frac{p^2_{\phi_2}}{\nu_4}\right) -V(\phi_1,\phi_2).\] 
  Suppose $\q$ is a fixed-point configuration on $\mathbb S^1$ of three masses $m_1,m_2,m_3$. Their positions are given by 
  \[ \varphi_1=0, \ \varphi_2=\alpha,\ \ \varphi_3=\alpha+\beta,  \] 
  (see Figure \ref{fig:acute}), where $\alpha, \beta$ are in region \eqref{acutecondition}. By Section \ref{configuration}, the masses and the two angles are related  by equations \eqref{3fp}. 
  Then the fixed-point solution $(0,\alpha,\alpha+\beta, 0,0,0)$ we want to study in $T^*(\mathbb S^1)^3$ becomes a rest point of the reduced Hamiltonian system  on $S'_0$: 
  \[\phi_1=\alpha, \ \ \ \phi_2= \beta + \nu_1 \alpha, \ \ \   p_{\phi_1}=0, \   \ \ p_{\phi_2}=0. \]
  Let us denote this rest point by $X$. Our goal is to study the stability of $X$, a property given by the following result.

\begin{theorem}\label{stableons1}
Every  acute triangle fixed-point  solution is Lyapunov  stable on $\mathbb S^1$. 
\end{theorem}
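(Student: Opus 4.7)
\emph{Strategy.} The reduced Hamiltonian $H_0 = \tfrac{1}{2}(p_{\phi_1}^2/\nu_3 + p_{\phi_2}^2/\nu_4) - V(\phi_1,\phi_2)$ is conserved, and its kinetic part is a positive-definite quadratic form in the momenta that vanishes at $X$. By the Dirichlet--Lagrange criterion, Lyapunov stability of $X$ will follow as soon as we show that $V$ attains a strict local maximum at $(\phi_1,\phi_2)=(\alpha,\beta+\nu_1\alpha)$. Since the affine substitution $(\phi_1,\phi_2)\mapsto(\alpha,\beta)=(\phi_1,\phi_2-\nu_1\phi_1)$ has nonsingular Jacobian, negative-definiteness of the Hessian is invariant under this change of variables, so I work in the cleaner $(\alpha,\beta)$ coordinates, where $V = m_1m_2\cot\alpha + m_2m_3\cot\beta - m_1m_3\cot(\alpha+\beta)$ (using $d_{13}=2\pi-(\alpha+\beta)$ and $\cot(2\pi-x)=-\cot x$).

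\emph{Main computation.} The critical-point conditions $V_\alpha = V_\beta = 0$ are exactly the fixed-point system \eqref{3fp}, which forces the three quantities
\[ \lambda := \frac{m_1 m_2}{\sin^2\alpha} = \frac{m_2 m_3}{\sin^2\beta} = \frac{m_1 m_3}{\sin^2(\alpha+\beta)} \]
to coincide and be strictly positive. Differentiating once more and substituting this common $\lambda$, the diagonal Hessian entries collapse via the identity $\cot x - \cot y = \sin(y-x)/(\sin x \sin y)$ to
\[ V_{\alpha\alpha} = \frac{2\lambda \sin\beta}{\sin\alpha \, \sin(\alpha+\beta)}, \qquad V_{\beta\beta} = \frac{2\lambda \sin\alpha}{\sin\beta \, \sin(\alpha+\beta)}, \]
while the off-diagonal entry is $V_{\alpha\beta} = -2\lambda \cos(\alpha+\beta)/\sin(\alpha+\beta)$. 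A short computation using $1-\cos^2 = \sin^2$ then reduces the determinant of the Hessian to $4\lambda^2 > 0$.

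\emph{Conclusion and expected obstacle.} The acuteness region \eqref{acutecondition} forces $\alpha,\beta\in(0,\pi)$ and $\alpha+\beta\in(\pi,2\pi)$, so $\sin\alpha,\sin\beta>0$ while $\sin(\alpha+\beta)<0$; hence both diagonal Hessian entries are strictly negative, and combined with the positive determinant this gives negative-definiteness. Therefore $H_0$ attains a strict local minimum at $X$ on $S'_0$, and conservation of energy delivers Lyapunov stability of $X$, hence of the original fixed-point solution. The only genuine subtlety in the argument is sign bookkeeping: the determinant simplifies to $4\lambda^2$ regardless of configuration, but the \emph{definiteness} of the Hessian hinges on the negativity of $\sin(\alpha+\beta)$, which is precisely the algebraic fingerprint of the acuteness condition---making transparent why the obtuse case, already ruled out by the existence theory, would also fail the stability test here.
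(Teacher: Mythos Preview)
Your proof is correct and follows essentially the same approach as the paper: pass to $(\alpha,\beta)$ coordinates, compute the Hessian of $V$, and verify negative-definiteness by checking that the diagonal entries are negative (via the sign of $\sin(\alpha+\beta)$) and the determinant is positive. Your introduction of the common value $\lambda = m_1m_2/\sin^2\alpha = m_2m_3/\sin^2\beta = m_1m_3/\sin^2(\alpha+\beta)$ is a tidy bookkeeping device that streamlines the paper's computation and makes the determinant simplification to $4\lambda^2$ especially transparent, but the underlying argument is the same.
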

\begin{proof}
We will show that the rest point $X$  is a local minimum of the Hamiltonian $H_0$. Since the Hamiltonian is preserved during any motion, we can then conclude that the rest point $X$ is stable.  The rest point $X$ is obviously a minimum of the kinetic energy. We will show that it is also a local maximum of $V$ by studying its Hessian matrix. 

We use the variables $(\alpha,\beta)$ instead of $(\phi_1,\phi_2)$  . Then  
\[ V(\alpha,\beta)=m_1m_2\cot \alpha +m_2m_3\cot \beta -m_1m_3\cot (\alpha +\beta),\] and the Hessian matrix $H(V)$ of $V$ at $X$ has the form     
\[ H(V)= 2\begin{bmatrix}\frac{m_1m_2 \cos \alpha}{\sin^3 \alpha}-\frac{m_1m_3 \cos (\alpha +\beta)}{\sin^3 (\alpha +\beta)}&  -\frac{m_1m_3 \cos (\alpha +\beta)}{\sin^3 (\alpha +\beta)}\\
 -\frac{m_1m_3 \cos (\alpha +\beta)}{\sin^3 (\alpha +\beta)}& \frac{m_2m_3 \cos \beta}{\sin^3 \beta}-\frac{m_1m_3 \cos (\alpha +\beta)}{\sin^3 (\alpha +\beta)}
  \end{bmatrix}.  \]
Recall that equations (\ref{3fp})  are 
 \[ \frac{m_2}{\sin^2 \alpha}= \frac{m_3}{\sin^2 (\alpha +\beta)},\ \ 
 \frac{m_2}{\sin^2 \beta}= \frac{m_1}{\sin^2 (\alpha +\beta)},\ \ 
 \frac{m_1}{\sin^2 \alpha}= \frac{m_3}{\sin^2 \beta}.\]
By direct computation,  we find that the diagonal entries of $\frac{1}{2}H(V)$ are
 \begin{align*}
   \frac{m_1m_2 \cos \alpha}{\sin^3 \alpha}-\frac{m_1m_3 \cos (\alpha +\beta)}{\sin^3 (\alpha +\beta)} &= \frac{m_1m_2}{\sin^2 \alpha}\frac{\sin \beta } {\sin (\alpha +\beta)\sin \alpha} &<0,\\
   \frac{m_2m_3 \cos \beta}{\sin^3 \beta}-\frac{m_1m_3 \cos (\alpha +\beta)}{\sin^3 (\alpha +\beta)} &= \frac{m_1m_3}{\sin^2 (\alpha +\beta)}\frac{\sin \alpha } {\sin (\alpha +\beta)\sin \beta} &<0.
 \end{align*}
Hence the trace of $H(V)$ is negative.  
 Here we use the fact that $(\alpha,\beta)$ is in region (\ref{acutecondition}), thus 
$ \sin\alpha >0,$ $ \sin\beta >0,$ and $ \sin(\alpha+\beta) <0.$  
Further straightforward computations show that the determinant of $\frac{1}{2}H(V)$ is
 \begin{equation*}
 \begin{split}
 & \frac{m_1m_2 \cos \alpha}{\sin^3 \alpha}\frac{m_2m_3 \cos \beta}{\sin^3 \beta}-   \left( \frac{m_1m_2 \cos \alpha}{\sin^3 \alpha}+\frac{m_2m_3 \cos \beta}{\sin^3 \beta}\right)\frac{m_1m_3 \cos (\alpha +\beta)}{\sin^3 (\alpha +\beta)}\\
 &=\frac{m_1m_3m_2^2 \cos \alpha \cos \beta}{\sin^3 \alpha\sin^3 \beta}-\frac{m_1m_3}{\sin^2 (\alpha +\beta)}\frac{\sin (\alpha +\beta)} {\sin \alpha \sin \beta}   \frac{m_1m_3 \cos (\alpha +\beta)}{\sin^3 (\alpha +\beta)}\\
 &= \frac{m_1m_3m_2^2 }{\sin^3 \alpha\sin^3 \beta}\left( \cos \alpha \cos \beta-\cos (\alpha +\beta)    \right)= \frac{m_1m_3m_2^2 }{\sin^2 \alpha\sin^2 \beta}>0.
 \end{split}
 \end{equation*}
These two facts imply that the two eigenvalues of $H(V)$ are both negative. Then the fixed-point configuration is a local maximum of $V$. We can thus conclude that the rest point $X=( \alpha,  \beta + \nu_1 \alpha,  0,0 )$ is a local minimum of $H=T-V$. This remark completes the proof. 
\end{proof}
 
We further study the stability of the associated relative equilibria. In this case $\dot{\varphi}_1=\dot{\varphi}_2=\dot{\varphi}_3=\omega,$ we have  $\bar{m}\dot{\tilde{\varphi}}=\bar{m}\omega$, and  $\J=\bar{m}\omega$. So the quotient manifold is $S'_{\bar{m}\omega}$. On this quotient manifold, $p_{\tilde{\varphi}}=\bar{m}\omega$ and  we can set  $\tilde{\varphi}=0$ since  we identify all points that differ by a rotation.
 Thus   we  use  $\phi_1,\phi_2, p_{\phi_1}, p_{\phi_2}$ as the coordinates of the 4-dimensional manifold $S'_{\bar{m}\omega}$. Under these coordinates, we have the reduced Hamiltonian
 \[ H_{\bar{m}\omega}= \frac{1}{2}\left( \frac{p^2_{\phi_1}}{\nu_3}+\frac{p^2_{\phi_2}}{\nu_4}\right) -V(\phi_1,\phi_2) + \frac{\bar{m}\omega^2}{2}.\]

Then the relative equilibrium (\ref{resolution}) in $T^*(\mathbb S^1)^3$ becomes a rest point  in $S'_{\bar{m}\omega}$, 
\[\phi_1=\alpha, \ \phi_2= \beta + \nu_1 \alpha, \  p_{\phi_1}=0, \   p_{\phi_2}=0. \]
Let us denote this rest point by $X_1$. Note that $H_0$ and $H_{\bar{m}\omega}$ are the same up to a constant. Then we can conclude that $X_1$ is also a local minimum of $H_{\bar{m}\omega}$. So we have proved the following result.

\begin{theorem}
Every relative equilibrium  associated to an acute triangle  fixed-point configuration is Lyapunov stable on $\mathbb S^1$. 
\end{theorem}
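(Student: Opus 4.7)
The plan is to piggyback directly on Theorem \ref{stableons1} by exploiting the observation that the reduced Hamiltonian $H_{\bar{m}\omega}$ on the quotient manifold $S'_{\bar{m}\omega}$ differs from the reduced Hamiltonian $H_0$ on $S'_0$ only by the additive constant $\bar{m}\omega^2/2$. The reason is structural: the force function $V$ depends only on differences of the angular positions, hence is $SO(2)$-invariant and descends to the same function $V(\phi_1,\phi_2)$ on either quotient. The kinetic term in the reduced chart $(\phi_1,\phi_2,p_{\phi_1},p_{\phi_2})$ is also identical. The only remnant of $\omega$ is the fixed value $p_{\tilde{\varphi}}=\bar{m}\omega$ of the ignorable momentum, contributing the constant $\bar{m}\omega^2/(2\bar{m})\cdot\bar{m}=\bar{m}\omega^2/2$ after substitution.

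First, I would verify that $X_1=(\alpha,\beta+\nu_1\alpha,0,0)$ is indeed a rest point of the reduced Hamiltonian flow: the vanishing of $\partial V/\partial\phi_i$ at $X_1$ is just a rewriting of the fixed-point equations \eqref{3fp}, and the momentum coordinates are already zero. Second, I would quote the Hessian computation in the proof of Theorem \ref{stableons1}: at $(\alpha,\beta+\nu_1\alpha)$ the Hessian of $V$ has negative trace and positive determinant, hence is negative definite, so $V$ achieves a strict local maximum there. Since the kinetic energy attains its strict minimum at $p_{\phi_1}=p_{\phi_2}=0$, the sum $H_{\bar{m}\omega}=T-V+\bar{m}\omega^2/2$ attains a strict local minimum at $X_1$.

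Finally, because $H_{\bar{m}\omega}$ is conserved along the flow on $S'_{\bar{m}\omega}$, I would close the argument by the standard Dirichlet-Lagrange-Lyapunov principle: the function $H_{\bar{m}\omega}-H_{\bar{m}\omega}(X_1)$ serves as a Lyapunov function whose sublevel sets form a neighborhood basis of $X_1$, giving Lyapunov stability of $X_1$ in $S'_{\bar{m}\omega}$. By the definition of stability adopted in Section 4, this is exactly Lyapunov stability of the associated relative equilibrium on $\mathbb{S}^1$.

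I do not expect any real obstacle: the work has already been done in Theorem \ref{stableons1}. The only point worth double-checking is the claim that $\omega$ enters $H_{\bar{m}\omega}$ only additively and not through the shape variables $(\phi_1,\phi_2)$ or their momenta. This is guaranteed by the particular Jacobi-type coordinates chosen, which split the $SO(2)$-drift variable $\tilde{\varphi}$ from the internal shape coordinates so cleanly that the reduction at any momentum level $p$ produces the same Hamiltonian on $(\phi_1,\phi_2,p_{\phi_1},p_{\phi_2})$ up to a constant. Once this is recorded, the theorem follows in one line.
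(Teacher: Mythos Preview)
Your proposal is correct and follows essentially the same approach as the paper: the paper's proof is just the terse observation that $H_{\bar m\omega}$ and $H_0$ differ only by the constant $\bar m\omega^2/2$, so the local-minimum property of $X$ established in Theorem~\ref{stableons1} carries over to $X_1$ verbatim. Your write-up is more explicit about why $\omega$ enters only additively and about invoking the Dirichlet--Lyapunov argument, but the underlying idea is identical.
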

 
\begin{rem}
The method of proving the stability of relative equilibria by showing that the rest points are minima of the Hamiltonian never works in the classical $n$-body problem. The obstacle lies in  the fact that planar central configurations of the classical $n$-body problem  are never  maxima of the force function, \cite{moeckel1}. 
\end{rem}

%%%%%%%%%
%%%%%%%%%
\section{Stability on $\mathbb S^2$}
%%%%%%%%%
%%%%%%%%%
In this section we study the linear stability of the above solutions on $\mathbb S^2$. Different from the previous case, their stability depends on the angular velocity $\omega$. We first introduce rotating coordinates to treat a general relative equilibrium on $\mathbb S^2$ as rest point, and obtain the linearized system 
$\dot{v}=L(v-X_\omega)$.
  We then compute $L$ for relative equilibria associated with  fixed-point configurations on the equator. As in the classical $n$-body problem, we study the stability of the rest points on a proper subspace. Inspired by the work of Moeckel \cite{moeckel1}, we find the proper linear subspace. In the end, we show that these solutions are linearly stable if $\omega^2$ is greater than a critical value.

So consider a  general relative equilibrium on $\mathbb S^2$ with angular velocity $\omega$ and introduce the rotating coordinates
\[\overline{\theta}_i= \theta_i,  \ \  \dot{\overline{\theta}}_i= \dot{\theta}_i, \ \  \overline{\varphi}_i= \varphi_i-\omega t, \ \  \dot{\overline{\varphi}}_i= \dot{\varphi}_i-\omega, \ \  \overline{p}_{\theta_i}=p_{\theta_i}, \ \  \overline{p}_{\varphi_i}=p_{\varphi_i},
i\in\{1,2,\dots,n\}.\] 
In these new coordinates, the original Hamiltonian system  
\begin{align}
& \dot{\theta}_i= \frac{\partial H}{\partial p_{\theta_i} }=\frac{p_{\theta_i}}{m_i}, 
 &\dot{{p}}_{\theta_i}&=-\frac{\partial H}{\partial \theta_i }=\frac{   p^2_{\varphi_i}\cos\theta_i}{m_i \sin^3\theta_i} - \frac{\partial U}{\partial \theta_i},\notag \\
&\dot{\varphi}_i=\frac{\partial H}{\partial p_{\varphi_i} }=\frac{p_{\varphi_i}}{m_i\sin^2\theta_i}, 
&\dot{p}_{\varphi_i}&=-\frac{\partial H}{\partial \varphi_i}=-\frac{\partial U}{\partial \varphi_i},\ i\in\{1,2,\dots,n\},\notag
\end{align}
becomes 
\begin{align}
&\dot{\overline{\theta}}_i=\frac{\overline{p}_{\theta_i}}{m_i},  
&\dot{\overline{p}}_{\theta_i}&=\frac{   \overline{p}^2_{\varphi_i}\cos\overline{\theta}_i}{m_i \sin^3\overline{\theta}_i} - \frac{\partial U}{\partial \overline{\theta}_i}, \notag \\ 
&\dot{\overline{\varphi}}_i=\frac{\overline{p}_{\varphi_i}}{m_i\sin^2\overline{\theta}_i}-\omega, 
&\dot{\overline{p}}_{\varphi_i}&=- \frac{\partial U}{\partial \overline{\varphi}_i},\ i\in\{1,2,\dots,n\}. \label{hamiltonequation}
\end{align}
Thus system (\ref{hamiltonequation}) is Hamiltonian with  
\[ H= \sum_{i=1}^n \frac{1}{2}\left(\frac{\overline{p}^2_{\theta_i}}{m_i} + \frac{\overline{p}^2_{\varphi_i}}{m_i\sin^2\overline{\theta}_i} -2\overline{p}_{\varphi_i} \omega\right) +U \]
and symplectic form ${\displaystyle w=d\left(\sum_{i=1}^n  \overline{p}_{\theta_i} d\overline{\theta}_i +\overline{p}_{\varphi_i} d \overline{\varphi}_i\right) }$. 

We will use $\theta_i,\varphi_i, p_{\theta_i},p_{\varphi_i}$ instead of $\overline{\theta}_i,\overline{\varphi}_i, \overline{p}_{\theta_i} \overline{p}_{\varphi_i}$ if no further confusion arises.  Denote by $X_{\omega}$ the rest point in system (\ref{hamiltonequation}) corresponding to a  relative equilibrium (\ref{resolution}) with angular velocity $\omega$. Then $X_\omega$ is 
\[ \theta_i(t)=\theta_i(0), \ \ \varphi_i(t)=\varphi_i(0), \ \ p_{\theta_i}=0, \ \ p_{\varphi_i}=\omega m_i\sin^2\theta_i, \]
and we are going  to study the stability of $X_\omega$ for the linearized system
\begin{equation} \dot{v}=L(v-X_\omega), \ \ v=(\theta_1,..,\theta_n,\varphi_1,..,\varphi_n,  p_{\theta_1},..,p_{\theta_n}, p_{\varphi_1}, ..,  p_{\varphi_n}   ).  
\label{lsystem}\end{equation}

By straightforward computation, we get 
\begin{frame}
\footnotesize
\arraycolsep=3pt % default: 5pt
\medmuskip = 1mu % default: 4mu plus 2mu minus 4mu
\[L= \begin{bmatrix}0&0&M^{-1}&0\\
K&0&0&M^{-1}C^{-1}\\
\frac{\partial^2 V}{\partial \theta_i\partial\theta_j}+ N& \frac{\partial^2 V}{\partial \theta_i\partial\varphi_j}&0&-K^T\\
 \frac{\partial^2 V}{\partial \varphi_i\partial\theta_j}& \frac{\partial^2 V}{\partial \varphi_i\partial\varphi_j} &0&0\end{bmatrix},\ \  
 N= 
 \begin{bmatrix}
  \frac{-p^2_{\varphi_1} (1+2\cos^2\theta_1)}{m_1\sin^4\theta_1}&\cdots&0\\
  0&\frac{-p^2_{\varphi_2} (1+2\cos^2\theta_2)}{m_2\sin^4\theta_2}&0\\
  \vdots & \cdots & \vdots\\
  0&\cdots&\frac{-p^2_{\varphi_n} (1+2\cos^2\theta_n)}{m_n\sin^4\theta_n}\\
   \end{bmatrix},\]
 
\[
K= 
 \begin{bmatrix}
  \frac{-2p_{\varphi_1} \cos\theta_1}{m_1\sin^3\theta_1}&\cdots&0\\
  0&\frac{-2p_{\varphi_2} \cos\theta_2}{m_2\sin^3\theta_2}&0\\
  \vdots & \cdots & \vdots\\
  0&\cdots&\frac{-2p_{\varphi_n} \cos\theta_n}{m_n\sin^3\theta_n}\\
   \end{bmatrix}, \ M^{-1}= 
\begin{bmatrix}
\frac{1}{m_1}&\cdots&0\\
 0&\frac{1}{m_2}&0\\
  \vdots & \cdots & \vdots\\
 0&\cdots&\frac{1}{m_n}\\
  \end{bmatrix}, \ \  
 C^{-1}=
 \begin{bmatrix}
 \frac{1}{\sin^2\theta_1}&\cdots&0\\
  0&\frac{1}{\sin^2\theta_2}&0\\
   \vdots & \cdots & \vdots\\
  0&\cdots&\frac{1}{\sin^2\theta_n}\\
   \end{bmatrix}, \ \ 
  \]
    \end{frame}
where $L$ is a $4n\times 4n$ matrix, whereas $N, K, M^{-1}$, and $C^{-1}$ are $n\times n$ matrices.
  
It is generally difficult to find the normal form of $L$. However, for  relative equilibria   associated to  a fixed-point configuration on the equator, things are easier. 

\begin{lemma}\label{Jacobian}
 For fixed-point configurations on the equator, $L= \begin{bmatrix}0&0&M^{-1}&0\\
 0&0&0&M^{-1}\\
 H_\omega& 0&0&0\\
  0& G &0&0\end{bmatrix},$ where $H_\omega=H-\omega^2 M=\left[\frac{\partial^2 V}{\partial \theta_i\partial\theta_j}\right]-\omega^2 M$ and $G= \left[\frac{\partial^2 V}{\partial \varphi_i\partial\varphi_j} \right]$. And the elements of $H$ and $G$ are 
    \[H_{ij}=\frac{m_im_j}{\sin^3d_{ij}}, \ \  H_{ii}=-\sum_{j\ne i,j=1}^{n}H_{ij}\cos d_{ij}, \ \ G_{ij}= \frac{-2m_im_j\cos d_{ij} }{\sin^3d_{ij}}, \ \  G_{ii}=-\sum_{j\ne i,j=1}^{n}G_{ij}.\]
\end{lemma}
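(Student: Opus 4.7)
The plan is to evaluate each block of the general matrix $L$ displayed before the lemma at the rest point $X_\omega$, where $\theta_i = \pi/2$ and $p_{\varphi_i} = \omega m_i \sin^2 \theta_i = \omega m_i$, and then to compute the relevant Hessian entries of $V$ from the formula $\cos d_{ij} = \sin\theta_i\sin\theta_j\cos(\varphi_i - \varphi_j) + \cos\theta_i\cos\theta_j$ by implicit differentiation. The diagonal building blocks $K$, $M^{-1}$, $C^{-1}$, $N$ reduce almost immediately: since $\cos(\pi/2)=0$ the matrix $K$ is identically zero, $C^{-1}$ becomes the identity so that $M^{-1}C^{-1} = M^{-1}$, and each diagonal entry of $N$ collapses to $-p_{\varphi_i}^2/m_i = -\omega^2 m_i$, whence $N = -\omega^2 M$. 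Once this is in hand, the first statement of the lemma ($H_\omega = H - \omega^2 M$) follows, provided the two off-diagonal Hessian blocks $\frac{\partial^2 V}{\partial\theta_i \partial\varphi_j}$ also vanish at $X_\omega$.

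The heart of the proof is therefore the computation, at the equator, of the first and second partials of $d_{ij}$ with respect to $\theta$'s and $\varphi$'s. First I would record the basic facts: differentiating $\cos d_{ij}$ and setting $\theta_i = \theta_j = \pi/2$ yields $\frac{\partial d_{ij}}{\partial \theta_i} = 0$ and $\frac{\partial d_{ij}}{\partial \varphi_i} = \sin(\varphi_i - \varphi_j)/\sin d_{ij}$. Next I would differentiate the identity $-\sin d_{ij}\, \partial_\bullet d_{ij} = \partial_\bullet \cos d_{ij}$ a second time, using the vanishing of $\partial_{\theta_i} d_{ij}$ at $X_\omega$ to discard cross terms. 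A short calculation produces, at the equator,
\begin{equation*}
\frac{\partial^2 d_{ij}}{\partial \theta_i \partial \theta_j} = -\frac{1}{\sin d_{ij}}, \quad \frac{\partial^2 d_{ij}}{\partial \theta_i^2} = \cot d_{ij}, \quad \frac{\partial^2 d_{ij}}{\partial \varphi_i \partial \varphi_j} = 0, \quad \frac{\partial^2 d_{ij}}{\partial \theta_i \partial \varphi_j} = 0,
\end{equation*}
with $\partial^2_{\varphi_i} d_{ij}$ also zero on the equator because $\partial_{\varphi_i} d_{ij}$ is locally constant. In particular $\partial^2_{\theta_i \varphi_j} V = 0$ at $X_\omega$, which establishes the claimed block structure of $L$.

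Finally, to read off the entries of $H$ and $G$, I would differentiate $V = \sum_{i<j} m_i m_j \cot d_{ij}$ twice and insert the formulas just obtained. For $i \ne j$ only the pair $(i,j)$ contributes; the first-derivative product term vanishes because $\partial_{\theta_i} d_{ij}=0$, leaving $H_{ij} = -\frac{m_i m_j}{\sin^2 d_{ij}}\cdot\frac{-1}{\sin d_{ij}} = \frac{m_i m_j}{\sin^3 d_{ij}}$, and similarly $G_{ij} = -2 m_i m_j \cos d_{ij}/\sin^3 d_{ij}$. For the diagonal entries, $H_{ii}$ is a sum of terms $-m_i m_l \cot d_{il}/\sin^2 d_{il}$, which is exactly $-\sum_{j\ne i} H_{ij} \cos d_{ij}$. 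The identity $G_{ii} = -\sum_{j\ne i} G_{ij}$ can be obtained either by the same direct computation or, more elegantly, from the $SO(2)$-invariance of $V$ in the $\varphi$-variables (so that $\sum_i \partial_{\varphi_i} V \equiv 0$, and differentiating once more gives $\sum_j G_{ij} = 0$).

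The only genuinely fiddly step is the bookkeeping in computing the four second-order derivatives of $d_{ij}$ at $\theta_i = \theta_j = \pi/2$; everything else is substitution and collection. I expect no conceptual obstacle, but care is required to confirm that all ``cross" terms of the form $\partial_\bullet d_{ij}\cdot\partial_\star d_{ij}$ that are kept or discarded are handled consistently, since several vanish at the equator for the same reason $(\cos(\pi/2)=0)$ that makes $K$ and the mixed Hessian blocks vanish.
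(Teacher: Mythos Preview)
Your proposal is correct and follows essentially the same route as the paper's proof: both simplify $K$, $C^{-1}$, and $N$ by setting $\theta_i=\pi/2$ (so that $K=0$, $C^{-1}=I$, $N=-\omega^2 M$), and then compute the Hessian blocks of $V$ by differentiating $d_{ij}$ twice and evaluating at the equator. The only cosmetic difference is that the paper obtains the derivatives of $d_{ij}$ via the Cartesian embedding (dot products such as $\q_j\cdot\partial\q_i/\partial\theta_i$), whereas you implicitly differentiate the spherical-coordinate identity $\cos d_{ij}=\sin\theta_i\sin\theta_j\cos(\varphi_i-\varphi_j)+\cos\theta_i\cos\theta_j$; the resulting list of partials at the equator is identical.
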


\begin{proof}
 In this case $\theta_i=\frac{\pi}{2}$, so  $K=0$, $C^{-1}=I_n$. Since that $d_{ij}=cos^{-1}\q_i\cdot\q_j$, we have  
 \[ \frac{\partial d_{ij}}{\partial \theta_i}= \frac{-1}{\sin d_{ij}} \left(\q_j\cdot\frac{\partial\q_i}{\partial \theta_i}\right), \  \  \frac{\partial d_{ij}}{\partial \varphi_i}=\frac{-1}{\sin d_{ij}} \left(\q_j\cdot\frac{\partial\q_i}{\partial \varphi_i}\right),  \  \]
 and 
 \begin{equation} 
 \begin{split}
 \frac{\partial^2 V}{\partial \theta_i\partial\theta_j}&=m_im_j\left[ \frac{3\q_i\cdot\q_j}{\sin^5d_{ij}}\left(\q_j\cdot\frac{\partial\q_i}{\partial \theta_i}\right)\left(\q_i\cdot\frac{\partial\q_j}{\partial \theta_j}\right)+\frac{1}{\sin^3d_{ij}}\frac{\partial\q_j}{\partial \theta_j}\cdot\frac{\partial\q_i}{\partial \theta_i}\right],\\
\frac{\partial^2 V}{\partial \theta_i\partial\theta_i}&=\sum_{j\ne i,j=1}^{n}
 m_im_j\left[\frac{3\q_i\cdot\q_j}{\sin^5d_{ij}}\left(\q_j\cdot\frac{\partial\q_i}{\partial \theta_i}\right)^2+\frac{1}{\sin^3d_{ij}}\q_j\cdot\frac{\partial^2\q_i}{\partial \theta_i^2}\right],\\
\frac{\partial^2 V}{\partial \varphi_i\partial\varphi_j}&
 =m_im_j\left[\frac{3\q_i\cdot\q_j}{\sin^5d_{ij}}\left(\q_j\cdot\frac{\partial\q_i}{\partial \varphi_i}\right)\left(\q_i\cdot\frac{\partial\q_j}{\partial \varphi_j}\right)+\frac{1}{\sin^3d_{ij}}\frac{\partial\q_j}{\partial \varphi_j}\cdot\frac{\partial\q_i}{\partial \varphi_i}\right],\\
 \frac{\partial^2 V}{\partial \varphi_i\partial\varphi_i}&=
 \sum_{j\ne i,j=1}^{n}
 m_im_j\left[\frac{3\q_i\cdot\q_j}{\sin^5d_{ij}}\left(\q_j\cdot\frac{\partial\q_i}{\partial \varphi_i}\right)^2+\frac{1}{\sin^3d_{ij}}\q_j\cdot\frac{\partial^2\q_i}{\partial \varphi_i^2}\right],\\
 \frac{\partial^2 V}{\partial \theta_i\partial\varphi_j}&
 =m_im_j\left[\frac{3\q_i\cdot\q_j}{\sin^5d_{ij}}\left(\q_j\cdot\frac{\partial\q_i}{\partial \theta_i}\right)\left(\q_i\cdot\frac{\partial\q_j}{\partial \varphi_j}\right)+\frac{1}{\sin^3d_{ij}}\frac{\partial\q_j}{\partial \varphi_j}\cdot\frac{\partial\q_i}{\partial \theta_i}\right],\\
 \frac{\partial^2 V}{\partial \theta_i\partial\varphi_i}&=\sum_{j\ne i,j=1}^{n}m_im_j\left[\frac{3\q_i\cdot\q_j}{\sin^5d_{ij}}\left(\q_j\cdot\frac{\partial\q_i}{\partial \varphi_i}\right)\left(\q_j\cdot\frac{\partial\q_i}{\partial \theta_i}\right)+\frac{1}{\sin^3d_{ij}}\q_j\cdot\frac{\partial^2\q_i}{\partial \varphi_i\partial\theta_i}\right].\notag
 \end{split}
 \end{equation}
Note that  \[\frac{\partial\q_i}{\partial \theta_i}=\frac{\partial}{\partial \theta_i} (\sin\theta_i\cos\varphi_i, \sin\theta_i\sin\varphi_i, \cos\theta_i) =(\cos\theta_i\cos\varphi_i, \cos\theta_i\sin\varphi_i, -\sin\theta_i)=(0, 0, -1).\] 
Similarly we obtain
\begin{align*} 
&\frac{\partial\q_i}{\partial \varphi_i}=(-\sin\varphi_i,\cos\varphi_i,  0), &\frac{\partial^2\q_i}{\partial \theta_i\partial \varphi_i}&=(0,0, 0),\\
&\frac{\partial^2\q_i}{\partial \theta^2_i}=-(\cos\varphi_i, \sin\varphi_i, 0),
&\frac{\partial^2\q_i}{\partial \varphi^2_i}&=-(\cos\varphi_i, \sin\varphi_i, 0).
\notag
\end{align*}
Thus we have 
\begin{align*}
 &\q_j\cdot\frac{\partial\q_i}{\partial \theta_i}=0,  &\frac{\partial\q_j}{\partial \theta_j}\cdot\frac{\partial\q_i}{\partial \theta_i}=1, \ \   &\q_j\cdot\frac{\partial^2\q_i}{\partial \theta^2_i}= -\cos d_{ij}, 
& \q_j\cdot\frac{\partial\q_i}{\partial \varphi_i}=\pm \sin d_{ij}, \\ 
&\frac{\partial\q_j}{\partial \varphi_j}\cdot\frac{\partial\q_i}{\partial \theta_i}=0, \ \ &\q_j\cdot\frac{\partial^2\q_i}{\partial \theta_i\partial \varphi_i}= 0, \ \ \
&  \frac{\partial\q_j}{\partial \varphi_j}\cdot\frac{\partial\q_i}{\partial \varphi_i}=\cos d_{ij},  \ \  &\q_j\cdot\frac{\partial^2\q_i}{\partial \varphi^2_i}= -\cos d_{ij}. 
  \notag
\end{align*}
Then straightforward computation shows that the block $\left[\frac{\partial^2 V}{\partial \theta_i\partial\varphi_j} \right]$  is zero,  and 
\[H_{ij}=\frac{m_im_j}{\sin^3d_{ij}}, \ \  H_{ii}=-\sum_{j\ne i,j=1}^{n}H_{ij}\cos d_{ij}, \ \ G_{ij}= \frac{-2m_im_j\cos d_{ij} }{\sin^3d_{ij}}, \ \  G_{ii}=-\sum_{j\ne i,j=1}^{n}G_{ij},\]
 a remark that completes the proof.
\end{proof}

We can now find the normal form of $L$ by computing the normal forms of $H_\omega M^{-1}$ and $GM^{-1}$.
\begin{lemma} \label{eigenspace}
$H_\omega M^{-1}$ and $GM^{-1}$ are diagonalizable. If $u\in \C^n$ is an eigenvector of $H_\omega M^{-1}$ ($GM^{-1}$) with eigenvalue $\lambda \neq 0$, then there exist a two dimensional invariant subspace of $L$ in $\C^{4n}$  on which  $L$ is 
$\begin{bmatrix}
 \sqrt{\lambda}&0\\0&-\sqrt{\lambda}
 \end{bmatrix}$.
If $u\in \C^n$ is an eigenvector of $H_\omega M^{-1}$ ($GM^{-1}$) with eigenvalue  $0$, then there exist a two dimensional  invariant subspace of $L$ in $\C^{4n}$  on which  $L$ is   
$\begin{bmatrix}
 0&1\\0&0
 \end{bmatrix}$.
\end{lemma}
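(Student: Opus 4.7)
The plan rests on two observations: the block structure of $L$ in Lemma \ref{Jacobian} lets us decompose its action between the four coordinate groups, while $H_\omega$ and $G$ are symmetric and $M$ is positive diagonal. This will reduce a $4n\times 4n$ eigenvalue problem to a collection of $2\times 2$ ones, indexed by the eigenvectors of $H_\omega M^{-1}$ and $GM^{-1}$.

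First I would record the symmetry properties. From the explicit formulas in Lemma \ref{Jacobian} one reads $H_{ij}=H_{ji}$ and $G_{ij}=G_{ji}$, and since $M$ is diagonal, $H_\omega=H-\omega^2 M$ is symmetric too. Because $M$ has strictly positive diagonal entries, $M^{1/2}$ is a well-defined symmetric positive-definite diagonal matrix, and the identity
\[ H_\omega M^{-1}=M^{-1/2}\bigl(M^{-1/2}H_\omega M^{-1/2}\bigr)M^{1/2} \]
exhibits $H_\omega M^{-1}$ as similar to the real symmetric matrix $M^{-1/2}H_\omega M^{-1/2}$. Hence $H_\omega M^{-1}$ is diagonalizable over $\R$ with real eigenvalues, and the same argument applies to $GM^{-1}$.

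For the invariant-subspace construction, suppose $H_\omega M^{-1}u=\lambda u$. Writing a vector of $\C^{4n}$ as $v=(x_1,x_2,x_3,x_4)$ with each $x_k\in\C^n$, the block form of $L$ from Lemma \ref{Jacobian} gives
\[ Lv=\bigl(M^{-1}x_3,\ M^{-1}x_4,\ H_\omega x_1,\ Gx_2\bigr). \]
I would then set $v_1:=(M^{-1}u,0,0,0)$ and $v_2:=(0,0,u,0)$; a direct computation yields $Lv_1=(0,0,H_\omega M^{-1}u,0)=\lambda v_2$ and $Lv_2=v_1$. Hence $\mathrm{span}_{\C}\{v_1,v_2\}$ is $L$-invariant, and in this ordered basis $L$ is represented by $\bigl[\begin{smallmatrix}0&1\\\lambda&0\end{smallmatrix}\bigr]$. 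For $\lambda\ne 0$ this $2\times 2$ matrix has distinct eigenvalues $\pm\sqrt{\lambda}$ with eigenvectors $v_1\pm\sqrt{\lambda}\,v_2$, so it diagonalizes to $\diag(\sqrt{\lambda},-\sqrt{\lambda})$; for $\lambda=0$ it is already the Jordan block $\bigl[\begin{smallmatrix}0&1\\0&0\end{smallmatrix}\bigr]$, because $Lv_1=0$ and $Lv_2=v_1$.

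For an eigenvector $u$ of $GM^{-1}$ with eigenvalue $\lambda$, the same argument works verbatim with $v_1:=(0,M^{-1}u,0,0)$ and $v_2:=(0,0,0,u)$. I do not expect a serious obstacle here: once the symmetry of the blocks and the permutation pattern of $L$ on the four coordinate groups are pinned down, the invariant subspaces are forced by the relations $Lv_1=\lambda v_2$ and $Lv_2=v_1$, and the normal forms fall out of solving $\mu^2=\lambda$.
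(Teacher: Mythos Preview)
Your proof is correct and essentially the same as the paper's: both establish diagonalizability via the symmetry of $H_\omega$ and $G$ together with the positive-definiteness of $M$ (your similarity $H_\omega M^{-1}=M^{-1/2}(M^{-1/2}H_\omega M^{-1/2})M^{1/2}$ is equivalent to the paper's remark that $H_\omega M^{-1}$ is self-adjoint for the inner product $\langle v,w\rangle=v^T M^{-1}w$), and both build the two-dimensional invariant subspaces from the same pair of vectors $(M^{-1}u,0,0,0)$ and $(0,0,u,0)$. The only cosmetic difference is that the paper writes down the eigenvectors $(M^{-1}u/(\pm\sqrt{\lambda}),0,u,0)$ directly, whereas you first obtain the $2\times 2$ matrix $\bigl[\begin{smallmatrix}0&1\\ \lambda&0\end{smallmatrix}\bigr]$ on $\mathrm{span}\{v_1,v_2\}$ and then diagonalize it.
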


\begin{proof}
It is enough to prove this for $H_\omega M^{-1}$. Note $H_\omega $ is symmetric, thus 
$H_\omega M^{-1}$ is symmetric with respect to the inner product $\langle v,w\rangle=v^TM^{-1}w$ so it is diagonalizable with respect to some $M^{-1}$ orthogonal basis.  Now suppose $ H_\omega M^{-1} u = \lambda u$, $\lambda \neq 0$. Then 
\begin{frame}
\footnotesize
\arraycolsep=3pt % default: 5pt
\medmuskip = 1mu % default: 4mu plus 2mu minus 4mu
\[ \begin{bmatrix}0&0&M^{-1}&0\\
 0&0&0&M^{-1}\\
 H_\omega &0&0&0\\
  0&G &0&0\end{bmatrix}\begin{bmatrix}\frac{M^{-1}u}{\sqrt{\lambda}}&\frac{M^{-1}u}{- \sqrt{\lambda}}\\0&0\\u&u\\0&0
  \end{bmatrix}=\begin{bmatrix}M^{-1}u&M^{-1}u\\0&0\\ \sqrt{\lambda}u&-\sqrt{\lambda}u\\0&0
    \end{bmatrix} =\begin{bmatrix}\frac{M^{-1}u}{\sqrt{\lambda}}&\frac{M^{-1}u}{- \sqrt{\lambda}}\\0&0\\u&u\\0&0
    \end{bmatrix} \begin{bmatrix}\sqrt{\lambda}&0\\0&-\sqrt{\lambda}  \end{bmatrix}. \]
\end{frame}
Similarly, if $H_\omega M^{-1}u =0$, then 
\[   \begin{bmatrix}0&0&M^{-1}&0\\
 0&0&0&M^{-1}\\
 H_\omega & 0&0&0\\
  0& G &0&0\end{bmatrix}\begin{bmatrix}M^{-1}u&0\\0&0\\0&u\\0&0
  \end{bmatrix}=\begin{bmatrix}0&M^{-1}u\\0&0\\0&0\\0&0
    \end{bmatrix} =\begin{bmatrix}M^{-1}u&0\\0&0\\0&u\\0&0
      \end{bmatrix}\begin{bmatrix}
       0&1\\0&0
       \end{bmatrix}.\]
This completes the proof.
\end{proof}

Normally, a rest point $X_\omega$ is called linearly stable if $X_\omega$ is a stable rest point of the linearized system (\ref{lsystem}). However, as for relative equilibria of the classical $n$-body problem, the symmetries and integrals of the problem make it impossible to satisfy this condition.

Note that $V$ is invariant under the $SO(3)$ action, which implies that any fixed-point configuration remains a fixed-point configuration after any rotation. We can find the three vectors  corresponding to the three rotations:
\begin{equation}v_1=\begin{bmatrix}
\cos d_{11}\\ \vdots\\ \cos d_{1n}
\end{bmatrix} =\begin{bmatrix}
x_1\\ \vdots\\ x_n
\end{bmatrix},   \ \   v_2=\begin{bmatrix}
\sin (\varphi_1-\varphi_1)\\\vdots\\\sin (\varphi_n-\varphi_1)
\end{bmatrix} =\begin{bmatrix}
y_1\\\vdots\\y_n
\end{bmatrix},   \ \   v_3= \begin{bmatrix}
1\\\vdots\\1
\end{bmatrix}.  \label{nullvectors}
\end{equation}
\begin{lemma}\label{eigenvalue}
Let $H$ and $G$ be the matrices defined in Lemma \ref{Jacobian} and $v_1, v_2, v_3$ the vectors defined in (\ref{nullvectors}). Then
\[Hv_1=0, \ \ Hv_2=0, \ \ Gv_3=0.  \]
\end{lemma}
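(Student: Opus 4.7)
The identity $Gv_3 = 0$ is immediate from the definitions in Lemma \ref{Jacobian}: since each diagonal entry is defined by $G_{ii} = -\sum_{j\ne i} G_{ij}$, the row sums of $G$ vanish, which is exactly what $Gv_3 = 0$ asserts.

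For $Hv_1 = 0$ and $Hv_2 = 0$, my plan is to reduce both identities to the fixed-point criterion (\ref{nfp}) via a direct trigonometric computation. Writing out the $k$-th component of $Hv_1$ and substituting $H_{kk} = -\sum_{j\ne k} H_{kj}\cos d_{kj}$ yields
\[
(Hv_1)_k \;=\; \sum_{j\ne k} H_{kj}\bigl(\cos d_{1j} - \cos d_{1k}\cos d_{kj}\bigr).
\]
On the equator $\cos d_{ij} = \q_i\cdot\q_j = \cos(\varphi_i-\varphi_j)$. A short manipulation (expanding $\cos(\varphi_k-\varphi_j)$ as $\cos(\varphi_k-\varphi_1)\cos(\varphi_j-\varphi_1)+\sin(\varphi_k-\varphi_1)\sin(\varphi_j-\varphi_1)$ and collecting terms using $\sin^2+\cos^2=1$) should factor the bracket as $\sin(\varphi_k-\varphi_1)\sin(\varphi_k-\varphi_j)$, giving
\[
(Hv_1)_k \;=\; \sin(\varphi_k-\varphi_1)\sum_{j\ne k} H_{kj}\sin(\varphi_k-\varphi_j).
\]
The inner sum is exactly $m_k^{-1}$ times the left-hand side of (\ref{nfp}) and therefore vanishes. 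An entirely analogous computation, now starting from $(v_2)_j = \sin(\varphi_j-\varphi_1)$, produces
\[
(Hv_2)_k \;=\; -\cos(\varphi_k-\varphi_1)\sum_{j\ne k} H_{kj}\sin(\varphi_k-\varphi_j) \;=\; 0.
\]

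Conceptually these three identities encode the $SO(3)$-invariance of $V$: at any critical point of $V$ the Hessian must annihilate the tangent vectors generated by infinitesimal rotations about the three coordinate axes. Because the Hessian is block-diagonal in $\theta$ and $\varphi$ on the equator (Lemma \ref{Jacobian}), the $z$-rotation acts purely on the $\varphi$ block with profile $(1,\ldots,1)$, while the $x$- and $y$-rotations act purely on the $\theta$ block and span the same two-dimensional subspace as $v_1$ and $v_2$; this is a shorter but less explicit way to see the result. The main technical step in either route is the trigonometric factorization above; once it is in hand, everything else is routine, and no information beyond (\ref{nfp}) and the definitions of $H$ and $G$ is used.
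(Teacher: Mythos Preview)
Your proof is correct and follows essentially the same route as the paper: the paper likewise reads off $Gv_3=0$ from the row-sum structure of $G$, and for $Hv_1$ and $Hv_2$ it performs the identical trigonometric reduction (via the addition formulas for $\cos(\varphi_1-\varphi_i)$ and $\sin(\varphi_i-\varphi_1)$) to a factor times $\sum_{i\ne k} m_km_i\sin(\varphi_k-\varphi_i)/\sin^3 d_{ki}$, which vanishes by (\ref{nfp}). One tiny slip: the inner sum $\sum_{j\ne k} H_{kj}\sin(\varphi_k-\varphi_j)$ is \emph{equal} to the left-hand side of (\ref{nfp}), not $m_k^{-1}$ times it, but this is immaterial since the expression is zero.
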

\begin{proof}
Note that
\[\cos d_{1i} =\cos (\varphi_1-\varphi_k+\varphi_k-\varphi_i)=\cos d_{1k} \cos d_{ki} -\sin (\varphi_1-\varphi_k) \sin(\varphi_k-\varphi_i),\]
 \[\sin (\varphi_i-\varphi_1) = \sin (\varphi_i-\varphi_k+\varphi_k-\varphi_1)        
            =\sin (\varphi_k-\varphi_1)\cos d_{ki} -\cos d_{1k} \sin(\varphi_k-\varphi_i).\]
Then the  $k$-th entry of $Hv_1$ is 
\begin{equation} 
\begin{split}
 & H_{k1}\cos d_{11}+ \cdots+ \left(-\sum _{i\neq k}H_{ki}\cos d_{ki}\right)\cos d_{1k}+ \cdots+H_{kn}\cos d_{1n} \\
&=\frac{m_km_1\cos d_{11}}{\sin^3d_{k1}}+\cdots-\left(\sum _{i\neq k}\frac{m_km_i\cos d_{1k}\cos d_{ki} }{\sin^3d_{ki}}\right)+ \cdots+ \frac{m_km_n\cos d_{1n}}{\sin^3d_{kn}} \\
&=\sum _{i\neq k} m_km_i\frac{\cos d_{1i}-\cos d_{1k}\cos d_{ki} }{\sin^3d_{ki}}=-\sin (\varphi_1-\varphi_k) \sum _{i\neq k} m_km_i\frac{\sin(\varphi_k-\varphi_i)\ }{\sin^3d_{ki}}.\\
\end{split}
\notag
\end{equation}
And the  $k$-th entry of $Hv_2$ is 
\begin{equation} 
\begin{split}
 & H_{k1}\sin (\varphi_1-\varphi_1)+ \cdots+ \left(-\sum _{i\neq k}H_{ki} \cos d_{ki}\right)\sin (\varphi_k-\varphi_1)+ \cdots+ H_{kn} \sin (\varphi_n-\varphi_1)\\
&=\sum _{i\neq k} m_km_i\frac{\sin (\varphi_i-\varphi_1)-\sin (\varphi_k-\varphi_1)\cos d_{ki} }{\sin^3d_{ki}}=-\cos d_{k1}  \sum _{i\neq k} m_km_i\frac{\sin(\varphi_k-\varphi_i)\ }{\sin^3d_{ki}}.\notag
\end{split}
\end{equation}
Using criterion (\ref{nfp}) of fixed-point configurations for $n$ masses, 
$$
\sum_{i, i\neq k} \frac{m_km_i\sin (\varphi_k-\varphi_i) }{\sin ^3 d_{ki}}=0,
$$ 
we find that $Hv_1= Hv_2=0$. Note that $\sum_{i=1}^n G_{ik}=0$ for all $k$. Hence $Gv_3=0$, a remark that completes the proof.
\end{proof}

Now we consider the stability of the fixed-point solutions, i.e, $X_0$. In this case, $H_\omega=H$. 
Consider the 6-dimensional subspace $E_1$ of $\C^{4n}$ spanned by the vectors
\[ \begin{bmatrix}v_1\\0\\0\\0
\end{bmatrix}, \   \begin{bmatrix}0\\0\\Mv_1\\0
\end{bmatrix}, \  \begin{bmatrix}v_2\\0\\0\\0
\end{bmatrix}, \   \begin{bmatrix}0\\0\\Mv_2\\0
\end{bmatrix}, \   \begin{bmatrix}0\\v_3\\0\\0
\end{bmatrix}, \   \begin{bmatrix}0\\0\\0\\Mv_3
\end{bmatrix}.    \]
Then using Lemma \ref{eigenvalue}, we obtain that $E_1$ is an invariant subspace for $L$. The matrix of $L|_{E_1}$ in this basis is
\[L|_{E_1}=\diag \Bigg\{\begin{bmatrix}
 0&1\\0&0
 \end{bmatrix}, \begin{bmatrix}
  0&1\\0&0
  \end{bmatrix},\begin{bmatrix}
   0&1\\0&0
   \end{bmatrix} \Bigg \}.   \]
Though all eigenvalues on $E_1$ are $0$, there are three nontrivial Jordan blocks, a fact which implies that the rest point is not linearly stable in the conventional sense. This instability is trivial as a natural effect of the symmetry of this Hamiltonian system. Indeed,
we can perturb a fixed-point solution into a relative equilibrium by any rotation in $SO(3)$. Then the angular positions of these orbits drift away from each other, a property mathematically reflected by the nontrivial Jordan blocks, as remarked in \cite{moeckel3}.  

It is traditional in celestial mechanics to view the drifts in this subspace as harmless. Indeed, they can be eliminated  by  fixing the angular momentum and passing to a quotient manifold under the action of the rotational symmetry group, \cite{moeckel3}. 
 Thus it is reasonable to formulate a definition of linear stability based on the behaviour of $L$ in a complementary subspace, \cite{moeckel1}. To define such a subspace, it is necessary to introduce the \emph{skew inner product} of two complex vectors $v,w \in \C^{4n}:$
\[ \Omega(v,w)=v^TJw, \ \ J_{4n \times 4n}= \begin{bmatrix} 0&-I\\
I&0\end{bmatrix}. \]
Using the fact that $L=JS$, $S^T=S$,  where $S$ is the Hessian matrix of the Hamiltonian at $X_0$, we obtain that
 \[ \Omega(v,Lw)=-\Omega(Lv,w).\]
With the help of this property it is easy to show that the skew-orthogonal complement of an invariant subspace of $L$ is again invariant. Indeed, let $E$ denote the skew orthogonal complement in $\C^{4n}$ of $E_1$, that is,
\[E=\{ v\in \C^{4n}: \Omega(v,w)=0 \ \mbox{for all}\ w\in E_1\}.\]
 Then $E$ is an $L$ invariant subspace of dimension $4n-6$.
 \begin{definition}
  A fixed-point solutions $X_0$ associated with a fixed-point configuration on the equator is called \emph{linearly stable} if $X_0$ is a stable rest point of the restriction of the linearized equation (\ref{lsystem}) to $E$.
  \end{definition}

 For $n=3,$ i.e., the relative equilibria associated with acute triangle fixed-point configurations, we have $E=\C^6$, and we can find the  normal form of $L|_E$. 

\begin{theorem} \label{main}
For each acute triangle fixed-point solution, $L|_E$ is diagonalizable and in properly chosen basis,
\[  L|_E=\diag \Big\{ \sqrt{\lambda_1}, -\sqrt{\lambda_1},   i\sqrt{|\lambda_2|}, -i\sqrt{|\lambda_2|},   i\sqrt{|\lambda_3|},-i\sqrt{|\lambda_3|}\Big\},\]  
where  $0,0,\lambda_1>0$ are  the eigenvalues of $HM^{-1}$, and $0,\lambda_2<0,\lambda_3<0$ are  the eigenvalues of $GM^{-1}$. All  acute triangle fixed-point solutions on the equator  are unstable on $\mathbb S^2$.  
\end{theorem}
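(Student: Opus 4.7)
The plan is to reduce the eigenvalue problem for $L|_E$ to the eigenvalue problems for the two $3\times 3$ matrices $HM^{-1}$ and $GM^{-1}$ appearing in Lemma~\ref{Jacobian}, and then to determine the signs of their nonzero eigenvalues. By Lemma~\ref{eigenvalue}, $Hv_1=Hv_2=0$ and $Gv_3=0$, so $HM^{-1}$ has eigenvalues $0,0,\lambda_1$ and $GM^{-1}$ has eigenvalues $0,\lambda_2,\lambda_3$. By Lemma~\ref{eigenspace}, each zero eigenvalue yields a nontrivial $2\times 2$ Jordan block for $L$, and these three blocks together span exactly the symmetry subspace $E_1$; each nonzero eigenvalue $\lambda$ yields a $2$-dimensional $L$-invariant subspace on which $L$ acts as $\diag(\sqrt{\lambda},-\sqrt{\lambda})$. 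Since $L$ is skew-adjoint with respect to $\Omega$, generalized eigenspaces for eigenvalues $\lambda,\mu$ with $\lambda+\mu\ne 0$ are skew-orthogonal; in particular the $\pm\sqrt{\lambda_j}$ eigenspaces are skew-orthogonal to $E_1$, so they lie in $E$. Because $\dim E=4n-6=6$ for $n=3$, they fill all of $E$, reducing the theorem to determining the signs of $\lambda_1,\lambda_2,\lambda_3$.

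To show $\lambda_2,\lambda_3<0$, I would observe that when restricted to $\theta_i=\pi/2$ the force function $V$ is $SO(2)$-invariant, so $V=\tilde V(\alpha,\beta)$ with $\alpha=\varphi_2-\varphi_1$ and $\beta=\varphi_3-\varphi_2$. The chain rule then gives $G=A^T\tilde H A$, where $A=\bigl(\begin{smallmatrix}-1 & 1 & 0\\ 0 & -1 & 1\end{smallmatrix}\bigr)$ and $\tilde H$ is the Hessian of $\tilde V$ at the fixed point. The proof of Theorem~\ref{stableons1} already showed that $\tilde H$ is negative definite (negative trace, positive determinant). Since $\ker A=\mathrm{span}(v_3)$, it follows that $G$ is negative semidefinite with one-dimensional kernel $\mathrm{span}(v_3)$, and because $M^{-1}$ is positive definite, Sylvester's law of inertia ensures that $GM^{-1}$ has the same signature as $G$; hence $\lambda_2,\lambda_3<0$.

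To show $\lambda_1>0$, I would test the quadratic form $u\mapsto u^T H u$ against the uniform vector $w=(1,1,1)^T$. Substituting $H_{ii}=-\sum_{j\ne i}H_{ij}\cos d_{ij}$ collapses the double sum into
\[w^T H w=\sum_{i,j}H_{ij}=2\sum_{i<j}\frac{m_im_j(1-\cos d_{ij})}{\sin^3 d_{ij}}>0,\]
where positivity follows from $d_{ij}\in(0,\pi)$. On the other hand $w\notin\ker H=\mathrm{span}(v_1,v_2)$: were $a\cos\varphi_i+b\sin\varphi_i=1$ for $i=1,2,3$, then $\cos(\varphi_i-\phi_0)$ would equal the same constant at three distinct $\varphi_i$, which is impossible. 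Expanding $w$ in an $M$-orthogonal eigenbasis of $HM^{-1}$, the nonvanishing component along the $\lambda_1$-eigenvector together with $w^T H w>0$ forces $\lambda_1>0$.

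The main obstacle I expect is this last step: attempting it by a direct computation of $\Tr(HM^{-1})=\lambda_1$ using the fixed-point relations $m_i\sin^2 d_i=\mu$ yields an expression of the shape $-\mu^{-3/2}\sum_i m_i^{3/2}(1-m_i)\cos d_i$ whose sign is not obvious, since the cosines $\cos d_i$ can take either sign. The trick is to bypass that computation via the manifestly positive test vector $(1,1,1)^T$, which is transverse to $\ker H$ because the three body positions are distinct. With all three signs determined, the diagonal normal form of $L|_E$ stated in the theorem follows directly from Lemma~\ref{eigenspace}, and the real positive eigenvalue $\sqrt{\lambda_1}$ yields linear instability of every acute-triangle fixed-point solution on $\mathbb S^2$.
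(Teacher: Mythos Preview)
Your argument is correct. The reduction to the eigenvalues of $HM^{-1}$ and $GM^{-1}$ via Lemmas~\ref{Jacobian}--\ref{eigenvalue}, and the passage from the full normal form of $L$ to $L|_E$ by skew-orthogonality, match the paper. For $\lambda_2,\lambda_3<0$ you make explicit what the paper states in one line: the Hessian $G$ in the $\varphi$-variables is $A^T\tilde H A$ with $\tilde H$ the negative-definite Hessian from Theorem~\ref{stableons1}, whence $G$ is negative semidefinite with kernel $\mathrm{span}(v_3)$; Sylvester then transfers the signature to $GM^{-1}$. That is exactly the paper's reasoning, just unpacked.

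The genuine difference is your treatment of $\lambda_1>0$. The paper observes that since $HM^{-1}$ has a double zero eigenvalue, $\lambda_1=\Tr(HM^{-1})$, and then shows each diagonal entry of $HM^{-1}$ is positive by the same trigonometric manipulations used in Theorem~\ref{stableons1} (for instance the middle entry collapses to $-\frac{m_3}{\sin^2\beta}\frac{\sin(\alpha+\beta)}{\sin\alpha\sin\beta}>0$). So the obstacle you anticipated is not real: the trace computation goes through cleanly once one uses the fixed-point relations~(\ref{3fp}) rather than the parametrization $m_i\sin^2 d_i=\mu$ you tried. Your alternative via the test vector $w=(1,1,1)^T$ is nonetheless a valid and rather elegant bypass: the identity $w^THw=2\sum_{i<j}m_im_j(1-\cos d_{ij})/\sin^3 d_{ij}>0$ uses only $d_{ij}\in(0,\pi)$, not the fixed-point equations, so it would apply to any geodesic configuration. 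One minor wording issue: the eigenbasis of $HM^{-1}$ is $M^{-1}$-orthogonal rather than $M$-orthogonal, but since $H$ has rank one your conclusion $\lambda_1>0$ follows immediately from $w\notin\ker H$ and $w^THw>0$ in any case. The trade-off is that the paper's trace computation also delivers the explicit value of $\lambda_1$ needed later in Theorem~\ref{main-1}, which your method does not provide.
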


\begin{proof} 
We first find the eigenvalues of $HM^{-1}$ and $GM^{-1}$. Recall that 
$ d_{12}=\alpha, d_{23}=\beta, d_{13}=2\pi -(\alpha+\beta).$
Using $\alpha, \beta$, by direct computation, we obtain that
\[ 
HM^{-1}=
   \begin{bmatrix}\frac{-m_2\cos \alpha}{\sin^3\alpha}+\frac{m_3\cos (\alpha+\beta)}{\sin^3(\alpha+\beta)}&\frac{m_1}{\sin^3\alpha}&\frac{-m_1}{\sin^3(\alpha+\beta)}\\
   \frac{m_2}{\sin^3\alpha}&\frac{-m_1\cos \alpha}{\sin^3\alpha}+\frac{-m_3\cos \beta}{\sin^3\beta}&\frac{m_2}{\sin^3\beta}\\
   \frac{-m_3}{\sin^3(\alpha+\beta)}&\frac{m_3}{\sin^3\beta}&\frac{m_1\cos (\alpha+\beta)}{\sin^3(\alpha+\beta)}+\frac{-m_2\cos \beta}{\sin^3\beta}\\  
     \end{bmatrix}. \ \]  
Lemma \ref{eigenvalue} implies that  $Mv_1$ and $Mv_2$ are two eigenvectors of $HM^{-1}$ with eigenvalue $0$. Thus the other eigenvalue $\lambda_1$ equals the trace of the matrix. Using the same idea as in the proof of Theorem  \ref{stableons1}, we find that the second diagonal entry is
\[   \frac{-m_1\cos \alpha}{\sin^3\alpha}+\frac{-m_3\cos \beta}{\sin^3\beta}= -\frac{m_3}{\sin^2\beta}\frac{\sin(\alpha+\beta)}{\sin\alpha\sin\beta}>0.\]
The first one and the third one are just opposite to diagonal entries of the matrix in the proof of Theorem \ref{stableons1}, so they are positive. Hence $\lambda_1$ is positive since it equals  the trace. 

Lemma \ref{eigenvalue} also implies that $G$ has one eigenvalue 0. Note that the proof of Theorem \ref{stableons1} implies that  fixed-point configurations are local maxima of $V$ on $(\mathbb S^1)^3\setminus\Delta$. Thus the two other eigenvalues of $G=\left[ \frac{\partial^2 V}{\partial \varphi_i\partial\varphi_j}\right]$ are both negative. Note that $M^{-\frac{1}{2}}$ is well defined. Then  $G$ is congruent to $ G_1:=(M^{-\frac{1}{2}})^T G M^{-\frac{1}{2}}$, which is similar to $M^{\frac{1}{2}} G_1  M^{-\frac{1}{2}}=GM^{-1}$.   By Sylvester's law of inertia, \cite{Fuh12},  we have 
\[n_-(GM^{-1} )=n_-(G)=2, \ \  n_0(GM^{-1} )=n_0(G)=1,\]
where $n_0(A)$ is the number of zero eigenvalues and $n_-(A)$ is the number of negative eigenvalues of matrix $A$.
This proves the eigenvalues of $HM^{-1}$ are $0,0,\lambda_1>0$, and the eigenvalues of $GM^{-1}$ are $0,\lambda_2<0,\lambda_3<0$. 

 By Lemma \ref{eigenspace},  we see that  $L$ on $\C^{12}$ is similar to 
 \[ \diag \Bigg \{\begin{bmatrix}
        0&1\\0&0
        \end{bmatrix},\begin{bmatrix}
               0&1\\0&0
               \end{bmatrix},\begin{bmatrix}
                      0&1\\0&0
                      \end{bmatrix},  \sqrt{\lambda_1}, -\sqrt{\lambda_1},   i\sqrt{|\lambda_2|}, -i\sqrt{|\lambda_2|},   i\sqrt{|\lambda_3|},-i\sqrt{|\lambda_3|} \Bigg \}. \]
 Recall that the  normal form of $L|_{E_1}$ is given by the first three nontrivial Jordan blocks.  We thus obtain that $L|_E$ is similar to 
  \[ \diag\Big\{ \sqrt{\lambda_1}, -\sqrt{\lambda_1},   i\sqrt{|\lambda_2|}, -i\sqrt{|\lambda_2|},   i\sqrt{|\lambda_3|},-i\sqrt{|\lambda_3|}\Big \}.\] 
  Then the positive eigenvalue $\sqrt{\lambda_1}$ indicates that  all  acute triangle fixed-point solutions and all associated relative equilibria are unstable on $\mathbb S^2$, a remark that completes the proof.  
\end{proof}

Now we study the linear stability of the associated relative equilibria. Define $E_2$ as the two-dimensional subspace spanned by 
\[  [0,v_3^T,0,0]^T, \ \ [  0,0,0,(Mv_3)^T]^T. \]
Then $L$ has one Jordan block on $E_2$. By the same reason, it is reasonable to define stability based on the behaviour of the system on the complementary space, that is
\[\tilde{E}=\{ v\in \C^{4n}: \Omega(v,w)=0 \ \mbox{for all}\ w\in E_2\}.\]
 Then $\tilde{E}$ is an $L$ invariant subspace of dimension $4n-2$.    
\begin{definition}
 A relative equilibrium $X_\omega$ associated with a fixed-point configuration on the equator is called \emph{linearly stable} if $X_\omega$ is a stable rest point of the restriction of the linearized equation (\ref{lsystem}) to $\tilde{E}$.
 \end{definition} 
 
 \begin{theorem} \label{main-1}
 Let $X_\omega$ be a relative equilibria   associated with an  acute triangle fixed-point configuration  on the equator. Then it is unstable on $\mathbb S^2$ if and only if $0< \omega^2 \le \lambda_1 $, 
 and it is linearly stable if and only  $ \lambda_1  <\omega^2$, where 
 \[  \lambda_1 = -\frac{m_2}{\sin^2 \alpha}\frac{\sin \beta } {\sin (\alpha +\beta)\sin \alpha} -\frac{m_3}{\sin^2 \beta}\frac{\sin \alpha } {\sin (\alpha +\beta)\sin \beta}-\frac{m_3}{\sin^2\beta}\frac{\sin(\alpha+\beta)}{\sin\alpha\sin\beta}.    \]  
 \end{theorem}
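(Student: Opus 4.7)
The plan is to reduce the question to a spectral analysis of the constant matrix $L$ from Lemma \ref{Jacobian}, exactly as in Theorem \ref{main}, and to read off the $\omega$-dependence from the single substitution $H\rightsquigarrow H_\omega=H-\omega^2 M$. Since $H_\omega M^{-1}=HM^{-1}-\omega^2 I$, the two matrices share an eigenbasis, so by Theorem \ref{main} and Lemma \ref{eigenvalue} the eigenvalues of $H_\omega M^{-1}$ are $-\omega^2$ (double, with eigenvectors $Mv_1,Mv_2$) and $\lambda_1-\omega^2$ (simple, with some eigenvector $u_*$), where $\lambda_1>0$ is the trace of $HM^{-1}$; meanwhile $GM^{-1}$ retains its eigenvalues $0,\lambda_2<0,\lambda_3<0$. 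Lemma \ref{eigenspace} then produces a two-dimensional $L$-invariant subspace attached to each eigenvalue: a nonzero $\mu$ yields a diagonal block $\diag(\sqrt{\mu},-\sqrt{\mu})$, while $\mu=0$ yields a nontrivial Jordan block.

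Next I would identify which of these subspaces survive the passage to $\tilde E$. The zero eigenvalue of $GM^{-1}$, carried by $Mv_3$, generates precisely the subspace $E_2$ being factored out. For the remaining invariant pieces I would verify skew-orthogonality to $E_2$: the subspaces built from $H_\omega M^{-1}$-eigenvectors have vanishing $\varphi$- and $p_\varphi$-components, so they pair trivially with $E_2$; and those coming from the nonzero eigenvalues of $GM^{-1}$ require only $u\cdot v_3=0$, which holds because $GM^{-1}$ is self-adjoint with respect to $\langle v,w\rangle=v^TM^{-1}w$ and $Mv_3$ is the eigenvector for the distinct eigenvalue $0$. Consequently $L|_{\tilde E}$ has the ten eigenvalues $\pm i\omega,\pm i\omega,\pm\sqrt{\lambda_1-\omega^2},\pm i\sqrt{|\lambda_2|},\pm i\sqrt{|\lambda_3|}$ and is diagonalizable whenever $\lambda_1-\omega^2\ne 0$.

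A trichotomy on the sign of $\lambda_1-\omega^2$ then settles the theorem. If $\omega^2>\lambda_1$, every eigenvalue of $L|_{\tilde E}$ is purely imaginary and $L|_{\tilde E}$ is diagonalizable, so $X_\omega$ is linearly stable. If $0<\omega^2<\lambda_1$, the positive real eigenvalue $\sqrt{\lambda_1-\omega^2}$ forces exponential growth and hence instability. The boundary case $\omega^2=\lambda_1$ is the delicate one: now $\lambda_1-\omega^2=0$ activates the second clause of Lemma \ref{eigenspace}, producing a nontrivial Jordan block that survives inside $\tilde E$ because the eigenvector $u_*$ generates a subspace with vanishing $\varphi$- and $p_\varphi$-components, automatically skew-orthogonal to $E_2$; the resulting polynomial growth again rules out stability. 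Combining the three cases yields the stated equivalence.

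Finally, the closed-form expression for $\lambda_1$ follows at once because Lemma \ref{eigenvalue} ensures that two of the three eigenvalues of $HM^{-1}$ vanish, so $\lambda_1=\Tr(HM^{-1})$; summing the three diagonal entries of $HM^{-1}$ already computed in the proof of Theorem \ref{main} and simplifying via the fixed-point identities \eqref{3fp} gives the formula claimed in the theorem. The step I expect to be the main obstacle is not computational but conceptual, namely the careful Jordan-block analysis at the critical value $\omega^2=\lambda_1$: one must argue via skew-orthogonality, rather than a bare dimension count, that the nontrivial block actually persists on $\tilde E$, so that instability is retained on the boundary of the stable regime.
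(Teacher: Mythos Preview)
Your proposal is correct and follows essentially the same route as the paper: both arguments observe that $H_\omega M^{-1}=HM^{-1}-\omega^2 I$ is simultaneously diagonalizable with $HM^{-1}$, so its eigenvalues are $\lambda_1-\omega^2,-\omega^2,-\omega^2$, and then invoke Lemma~\ref{eigenspace} together with the spectral information about $GM^{-1}$ already established in Theorem~\ref{main} to read off the eigenvalues of $L|_{\tilde E}$ and obtain $\lambda_1=\Tr(HM^{-1})$. Your write-up is in fact more careful than the paper's, which is quite terse: you explicitly verify the skew-orthogonality of the relevant invariant subspaces to $E_2$ and you spell out the boundary case $\omega^2=\lambda_1$ via the nontrivial Jordan block, whereas the paper simply asserts that the eigenvalues on $\tilde E$ are purely imaginary if and only if $\lambda_1<\omega^2$.
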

 
 \begin{proof}
 By Lemma \ref{eigenspace} and the proof of the above theorem, we only need to find the eigenvalues of $HM^{-1} -\omega^2$. Note that $HM^{-1}$ has three eigenvectors, with two eigenvalues being zero. Thus there exists an invertible matrix $P$ such that 
 $$
 HM^{-1}=P \diag \{ \lambda_1, 0,0  \} P^{-1}.
 $$
 Therefore 
 $$
 HM^{-1} -\omega^2= P \diag \{ \lambda_1 -\omega^2, -\omega^2,-\omega^2  \}P^{-1}.
 $$ 
 Thus the eigenvalues of $L$ restricted to $\tilde{E}$ are all purely complex if and only if $\lambda_1< \omega^2$. Straightforward computations lead to the value of $\lambda_1$. This remark completes the proof.
 \end{proof}
 
 It is interesting to compare the fixed points on $\mathbb S^1$ and  the collinear central configurations of the classical $n$-body problem. The fixed points of three particles are local maxima of the potential restricted on the equator, while the collinear central configurations are local minima of the potential restricted on a line. For each  collinear configuration of the  classical $n$-body problem, there is only one angular velocity to make a circular motion, while any angular velocity leads to circular motion in our case. And it is very interesting to notice that the stability depends on the velocity.\\
 
\noindent{\bf Acknowledgments.} This research was supported in part by an NSERC of Canada Discovery Grant (Florin Diacu), 
a CONACYT Fellowship (Juan Manuel S\'anchez-Cerritos), and a University of Victoria Scholarship (Shuqiang Zhu).

  %%%%%%%
  %%%%%%%

\end{document}